\newtheorem{thm}{Theorem}[section]
\newtheorem*{theorem*}{Theorem}
\newtheorem{lem}[thm]{Lemma}
\newtheorem{prop}[thm]{Proposition}
\newtheorem{cor}[thm]{Corollary}
\newtheorem*{con*}{Conjecture}
\theoremstyle{remark}
\newtheorem{rem}[thm]{Remark}
\theoremstyle{definition}
\providecommand\sslash{\mathbin{/\mkern-5.5mu/}}
\newcommand{\sldos}{\SL(2,\mathbb{C})}
\newcommand{\CC}{{\mathbb C}}
\newcommand{\ZZ}{{\mathbb Z}}
\newcommand{\PP}{{\mathbb P}}
\newcommand{\RR}{{\mathbb R}}
\newcommand{\G}{{\Gamma}}
\newcommand{\cO}{{\mathcal O}}
\newcommand{\la}{\langle}
\newcommand{\ra}{\rangle}
\newcommand{\g}{\gamma}
\newcommand{\x}{\times}
\DeclareMathOperator{\Id}{id}
\DeclareMathOperator{\Pic}{Pic\,}           
\DeclareMathOperator{\Hom}{Hom\,}           
\DeclareMathOperator{\Stab}{Stab}
\DeclareMathOperator{\GL}{GL}
\DeclareMathOperator{\SU}{SU}
\DeclareMathOperator{\U}{U}
\let\S\relax
\newcommand{\S}{\mathrm{S}}
\DeclareMathOperator{\SL}{SL}
\DeclareMathOperator{\Sym}{Sym}    
\DeclareMathOperator{\SSym}{SSym}    
\DeclareMathOperator{\diag}{diag}
\title[Stratification of $\SU(r)$-character varieties of twisted Hopf links]{Stratification of $\SU(r)$-character varieties\\ of twisted Hopf links}
\author[A. Gonz\'alez-Prieto]{\'Angel Gonz\'alez-Prieto}
\address{Departamento de \'Algebra, Geometr\'ia y Topolog\'ia, Facultad de Ciencias Matem\'aticas, Universidad Complutense de Madrid, Plaza Ciencias 3, 28040 Madrid Spain.}
\address{Instituto de Ciencias Matem\'aticas (CSIC-UAM-UCM-UC3M), C.\ Nicolás Cabrera 13-15, 28049 Madrid Spain.}\email{angelgonzalezprieto@ucm.es}
\author[M. Logares]{Marina Logares}
\address{Departamento de \'Algebra, Geometr\'ia y Topolog\'ia, Facultad de Ciencias Matem\'aticas, Universidad Complutense de Madrid, Plaza Ciencias 3, 28040 Madrid Spain.}
\email{mlogares@ucm.es}
\author[J. Mart\'{\i}nez]{Javier Mart\'{\i}nez}
\address{Departamento de Matem\'atica Aplicada, Ciencia e Ingeniería de los Materiales y Tecnología Electrónica. E.S. Ciencias Experimentales y Tecnología, Universidad Rey Juan Carlos, C.\ Tulipán 0, 28933 Móstoles, Madrid Spain.}
\email{javier.martinezmar@urjc.es}
\author[V. Mu\~noz]{Vicente Mu\~noz}
\address{Instituto de Matem\'atica Interdisciplinar (IMI) and Departamento de \'Algebra, Geometr\'ia y Topolog\'ia, Facultad de Ciencias Matem\'aticas, Universidad Complutense de Madrid, Plaza Ciencias 3, 28040 Madrid Spain.}\email{vicente.munoz@ucm.es}
\keywords{character variety, representation varieties, unitary group, knots, links}
\subjclass{Primary: 14M35. Secondary: 57K31}
\begin{document}

\begin{abstract}
We describe the geometry of the character variety of representations of 
the fundamental
group of the complement of a Hopf link with $n$ twists, namely $\G_{n}=\la x,y \,| \, [x^n,y]=1 \ra$
into the group
$\SU(r)$. For arbitrary rank, we provide geometric descriptions of the loci of irreducible and totally reducible representations. In the case $r = 2$, we provide a complete geometric description of the character variety, proving that this $\SU(2)$-character variety is a deformation retract of the larger $\SL(2,\CC)$-character variety, as conjectured by Florentino and Lawton. In the case $r = 3$, we also describe different strata of the $\SU(3)$-character variety according to the semi-simple type of the representation.
\end{abstract}

\maketitle

\begin{center}
{\center \em Dedicated to Prof.\ Peter E.\ Newstead on the occasion of his 80th birthday.}
\end{center}

\section{Introduction}\label{sec:introduction}

Let $\G$ be a finitely generated group and $G$ a real or complex algebraic group. A
{representation} of $\G$ into $G$ is a group homomorphism $\rho: \G\to G$.
Consider a presentation $\G=\la \g_1,\ldots, \g_k \,|\, \{r_{\lambda}\} \ra$, 
where $\{r_\lambda\}$ is a finite generating set of relations of $\G$. The map $\rho$ is completely
determined by the $k$-tuple $(A_1,\ldots, A_k)=(\rho(\g_1),\ldots, \rho(\g_k))$
subject to the relations $r_\lambda(A_1,\ldots, A_k)=\Id$, for all $\lambda$.
In this way, the set of representations of $\G$ into $G$ is in bijection with the algebraic set
 $$
R(\G, G) =\, \{(A_1,\ldots, A_k) \in G^k \, | \, r_\lambda(A_1,\ldots, A_k)=\mathrm{id},  \forall \lambda \, \}\subset G^{k}\, .
$$

Two representations $\rho$ and $\rho'$ are said to be
equivalent if there exists $g \in G$ such that $\rho'(\gamma)=g^{-1} \rho(\gamma) g$,
for every $\gamma \in \G$. When $\rho$ is a faithful representation into  $G \subset \GL(V)$, the equivalence of representations means that $\rho$ and $\rho'$ are the same representation up to a $G$-change of basis of $V$. The moduli space of representations 
(or character variety) can be thus obtained as the GIT quotient
 $$
 X(\G, G) = R(\G, G) \hspace{-1pt}\sslash\hspace{-1pt} G\, .
 $$

An important instance happens when $G = \GL(r, \CC)$, for which we recover the classical notion of a linear representation 
as a $\G$-module structure on the vector space $\CC^r$.
It is worth noticing that, when $\G$ is a finite group, the vector space $\CC^r$ can be equipped with a $\G$-invariant hermitian metric. Hence, any representation $\rho: \G \to \GL(r, \CC)$ descends to an $\U(r)$-representation
	\[
\begin{displaystyle}
   \xymatrix
   {	\G \ar[rr]^{\rho} \ar@{-->}[rrd]_{\tilde{\rho}} && \GL(r, \CC) \\
    && \U(r) \ar@{^{(}->}[u]
      }
\end{displaystyle}   
\]
In other words, $X(\G, \GL(r, \CC)) \cong 
X(\G, \U(r))$. However, in the general case in which $\G$ is only finitely generated, such an invariant metric may not exist, so $X(\G, \U(r))$ is only a real subvariety of $X(\G, \GL(r, \CC))$. Similar considerations can be done in the case in which we fix the determinant of the representation, so we analyze the descending property of representations induced by the inclusion $\SU(r) \hookrightarrow \SL(r. \CC)$, which exhibits $X(\G, \SU(r))$ as a real subvariety of $X(\G, \SL(r, \CC))$.

For non-finite groups $\G$, the situation is different. In \cite{florentino2020flawed}, it is 
proved that, when $\G$ is a free product of nilpotent groups or a star-shaped RAAG (Right Angled Artin Group), 
the inclusion $X(\G, K) \hookrightarrow X(\G, G)$ is a deformation retract for any reductive group $G$ and its maximal compact subgroup $K \subset G$ (a property called `flawed'). On the contrary, for $\G = \pi_1(\Sigma_g)$, the fundamental group of a compact orientable surface $\Sigma_g$
of genus $g \geq 2$, 
this inclusion is never a homotopy equivalence when $G$ is reductive and non-abelian (it is said that $\G$ is a `flawless' group in the language of  \cite{florentino2020flawed}). 

The study of the homotopy type of these character varieties in the case of knot groups was initiated in \cite{Munoz}. For a knot $K\subset S^3$, the finitely presented group
$\G = \pi_1(S^3 - K)$ is an important invariant of the knot, called the
{knot group}. The first non-trivial family of knots studied in the literature has been the torus knot of type $(m,n)$, for which
the knot group is $\G=\la x,y \, | \, x^n=y^m\ra$.
For the torus knot group, \cite{Martinez-Munoz:2015} and \cite{Munoz} prove that such 
inclusion is a deformation retract in the case $\SU(2) \hookrightarrow \SL(2, \CC)$, and 
\cite{gonzalezmartinezmunoz2022} proves the same result in the case $\SU(3) \hookrightarrow \SL(3, \CC)$.
 
The aim of this work is to extend this program to a family of $3$-dimensional links. In analogy with the cases of knots, given a link $L \subset S^3$, the {link group} is the
fundamental group $\G=\pi_1(S^3-L)$ of its complement. For an algebraic group $G$,
we have the $G$-character variety of the link
$$
	X(L, G) = \Hom(\pi_1(S^3-L), G) \sslash G\, ,
$$
as before. 
Despite of the advances for representation varieties of knots, much less is known in the case of links. 
An obvious case is the character variety of trivial links, i.e.\ representations of the free group. Very recently, more complicated links have been 
studied, such as the twisted Alexander polynomial for the Borromean link in \cite{ChenYu}. 
In \cite{gonzalezmunoz2022hopf}, it is addressed the extension of the analysis of the geometry of character varieties for the 
the ``twisted'' Hopf link $H_n$, obtained by twisting a classical Hopf link with $2$ crossings to get $2n$ crossings, as depicted in Figure \ref{fig0}.

\begin{figure}[ht]
\begin{center}
\includegraphics[width=4cm]{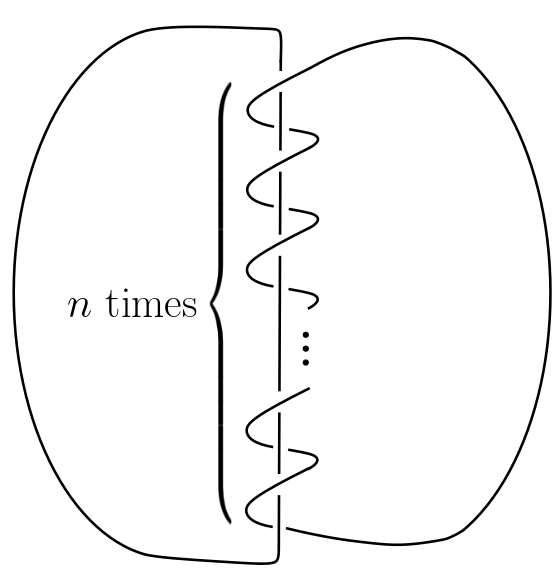}
\caption{\label{fig0} The twisted Hopf link of $n$ twists.}
\end{center}
\end{figure}

The fundamental group of the link complement of $H_n$ can be computed through a Wirtinger presentation \cite{gonzalezmunoz2022hopf},
giving rise to the group $\Gamma_n = \langle a, b \,|\, [a^n,b] = 1 \rangle$. Therefore, the associated $G$-character variety is
$$
	X(H_n, G) = \left\{(A,B) \in G^2 \,|\, [A^n,B] = 1\right\} \sslash G.
$$
In this sense, $X(H_n, G)$ should be understood as the variety counting ``supercommuting'' elements of $G$, generalizing the case $n=1$ of the usual Hopf link that corresponds to commuting elements.

In this setting, we shall show in this paper how the character varieties of twisted Hopf links can be stratified according to the type of its semisimple decomposition in the cases $G = \SU(r)$ and $G = \U(r)$, an approach that was initiated in \cite{SL4torusknot}. Moreover, we shall provide an explicit description of its irreducible locus (Section \ref{sec:irreducible}) and locus of representations that fully split into $1$-dimensional representations (Section \ref{sec:totally-reducible}). This will lead us to explore other auxiliary spaces such as symmetric spaces of tori. In Section \ref{sec:SU2character}, we will apply these techniques to describe the global geometry of the $\SU(2)$-character variety. Furthermore, in Section \ref{sec:homotopy-type} we will compare its topology with the one of the $\SL(2,\CC)$-character variety, variety that was previously studied in \cite{gonzalezmunoz2022hopf}. These analyses lead to the main result of this work.

\begin{theorem*}
The $\SU(2)$-character variety of a twisted Hopf link is a strong deformation retract of the corresponding $\SL(2, \CC)$-character variety. Moreover, they are homotopically equivalent to the wedge of $n$ copies of $2$-spheres. 
\end{theorem*}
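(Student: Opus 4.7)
The plan is to combine the stratified descriptions of both character varieties with a stratum-by-stratum polar retraction, and then read off the homotopy type from the resulting cell structure.

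First, I would assemble an explicit description of $X(H_n, \SU(2))$ from the preceding sections. Writing $T \subset \SU(2)$ for the maximal torus and $W = \ZZ_2$ for its Weyl group acting on $T$ by inversion, the totally reducible locus of Section~\ref{sec:totally-reducible} gives the abelian stratum $X_{ab} = (T \times T)/W$, the \emph{pillowcase}, which is homeomorphic to $S^2$. Since $[A^n, B] = 1$ forces either $B$ to commute with $A$ (reducible) or $A^n = \pm I$ with $A$ non-scalar (irreducible), the remaining strata are indexed by those conjugacy classes of $A$ with eigenvalue angle $\theta_k = k\pi/n$, $k = 1, \ldots, n-1$. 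For each $k$, the residual stabilizer is $T$, and the stratum is the conjugation quotient $\SU(2)/T \cong D^2_k$, a closed $2$-disk whose boundary circle (corresponding to $B \in T$) is glued to the latitude $C_k \subset X_{ab}$ at height $\theta_k$. Thus
\[
X(H_n, \SU(2)) \;\cong\; S^2 \cup_{C_1} D^2_1 \cup \cdots \cup_{C_{n-1}} D^2_{n-1},
\]
with $n-1$ pairwise disjoint separating latitude circles.

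Next, I would construct the strong deformation retract. The description of $X(H_n, \sldos)$ in \cite{gonzalezmunoz2022hopf} admits a parallel stratification, with $T_\CC = \CC^*$ replacing $T$ and the conjugation quotient $\sldos/T_\CC$ replacing $D^2_k$. The polar map $r_s(z) = z\cdot|z|^{-s}$, $s \in [0,1]$, gives a $W$-equivariant strong deformation retract of $\CC^*$ onto $S^1$, and hence of $(T_\CC\times T_\CC)/W$ onto $(T \times T)/W$. For the irreducible strata, the global polar decomposition $\sldos = \SU(2)\cdot P$, with $P$ the cone of positive Hermitian matrices of determinant $1$, furnishes a $T$-equivariant strong deformation retract of $\sldos/T_\CC$ onto $\SU(2)/T = D^2_k$, which restricts to $r_s$ along the boundary $T_\CC$. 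These retracts agree on overlaps and therefore assemble into a strong deformation retract of $X(H_n, \sldos)$ onto $X(H_n, \SU(2))$.

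Finally, I would compute the homotopy type. Each attached disk $D^2_k$ is contractible, so collapsing it is a homotopy equivalence; its net effect on the pillowcase is to collapse the latitude $C_k$ to a point. Since the $C_k$ are disjoint and each separates $S^2$, an easy induction on $n$ shows
\[
S^2/(C_1 \sqcup \cdots \sqcup C_{n-1}) \;\simeq\; \bigvee_{i=1}^{n} S^2,
\]
yielding the claim. The main difficulty I anticipate lies in the second step: checking that the fiberwise polar retracts on the irreducible strata $\sldos/T_\CC$ glue continuously to the retract on $(T_\CC\times T_\CC)/W$ along the attaching circles, and behave correctly at the four cone points $(A,B) \in \{\pm I\}^2$ of the pillowcase where the local model degenerates. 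Because the polar retract is natural with respect to the $\SU(2)$-action and restricts to the identity on the compact factors, this compatibility is essentially formal, but it is the point that demands the most care once the stratified descriptions are written down precisely.
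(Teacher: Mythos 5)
Your global description of $X(H_n,\SU(2))$ (pillowcase $S^2$ with $n-1$ closed discs attached along disjoint latitude circles) matches the paper, and your computation that collapsing the discs yields a wedge of $n$ spheres is the right argument. The radial retraction $(T_\CC\times T_\CC)/W \to (T\times T)/W$ on the abelian stratum is also exactly what the paper does (Proposition~\ref{prop:redretract}). The gap is in the irreducible strata, and it is not the gluing compatibility you flag but something prior.

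First, the polar decomposition $B=UP$ is not $T_\CC$-equivariant for conjugation: if $t\in T_\CC$ is not unitary, then $tBt^{-1}=(tUt^{-1})(tPt^{-1})$ is \emph{not} the polar decomposition of $tBt^{-1}$ (the factor $tUt^{-1}$ is not unitary and $tPt^{-1}$ is not Hermitian). So the retraction $B_s=UP^{1-s}$ does not descend to a map on the orbit space $\sldos/T_\CC$, and your ``fiberwise polar retract'' is not well-defined on the quotient. Second, and more fundamentally, no retraction can respect the stratification: the paper shows in Remark~\ref{rem:homotopy-original} that a component of the $\SL(2,\CC)$-irreducible locus (for fixed eigenvalues of $A$) is $\S\mathbf{F}_\CC\cong\CC^2-\CC^*$, which has $b_1=b_2=1$, while the corresponding $\SU(2)$-stratum $\S\mathbf{F}$ is an open disc, hence contractible. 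Since these are not homotopy equivalent, any strong deformation retraction of the $\SL(2,\CC)$-character variety onto the $\SU(2)$-one must move irreducible points across the reducible boundary. What the paper does is pass to the \emph{closures} $\overline{\S\mathbf{F}}_\CC\cong\CC^2$ (coordinates $(a,d)$) and $\overline{\S\mathbf{F}}\cong\overline{\mathbf{D}}$, and build an explicit two-step homotopy (Lemmas~\ref{lem:first-retraction} and~\ref{lem:angle}): first retract $|a|$ and $|d|$ to their geometric mean while keeping the hyperbola $\{ad=1\}$ of reducible classes invariant, then linearly homotope the argument of $d$ to $-\arg a$, a step that necessarily passes through the origin (a reducible point). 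To repair your proof you would need to replace the polar retraction with a homotopy defined directly on the orbit coordinates, engineered to preserve the reducible hyperbola and restrict on it to the abelian retraction — which is precisely the content of those two lemmas.
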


Notice that this result provides the first non-trivial extension of the results of \cite{florentino2020flawed} to link groups. In the higher dimensional cases of $\U(2)$ and $\SU(3)$, we also provide a thorough description of the semisimple stratification arising in the corresponding character varieties. Even though each piece is fully described, the geometry involved is very complicated and a complete description of the intersection pattern that arises is unavailable. This prevents further cohomological calculations that are needed to characterize the homotopy type of these varieties.

Despite this essential obstruction, our results suggest that an alternative approach can be considered to address the problem of comparing the homotopy type of $\SU(r)$ and $\SL(r, \CC)$ character varieties. Indeed, the methods of this work show that, even though the retraction $X(H_n, \SL(2,\CC)) \to X(H_n, \SU(2))$ does not respect the semisimple strata, it does keep their closures invariant. In this way, even though we do not expect the existence of any retraction that shrinks each stratum locally, such deformation retract may be possible if we allow the homotopy to use neighbouring strata in the closure. With this idea, we pose the following conjecture.

\begin{con*}
For any $r \geq 1$, the $\SU(r)$-character variety of a twisted Hopf link is a strong deformation retract of the corresponding $\SL(r, \CC)$-character variety.
\end{con*}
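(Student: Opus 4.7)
The plan is to construct an explicit strong deformation retract $F_t \colon X(H_n, \SL(r,\CC)) \to X(H_n, \SL(r,\CC))$ with $F_0 = \Id$ and $F_1$ landing in (and fixing) $X(H_n, \SU(r))$, extending the $r=2$ case proved in the paper. I would organize the argument around the semisimple stratification, but with the explicit understanding --- emphasized by the authors --- that the retraction itself must cross strata and so the construction has to be performed on strata closures, not on strata in isolation.

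The first step is a normal form for semisimple representatives. Every point of $X(H_n, \SL(r,\CC))$ is represented by a pair $(A,B)$ with $[A^n,B]=\Id$ and with $A,B$ semisimple (use Jordan decomposition together with the standard GIT fact that only the semisimple part survives in the categorical quotient). Since $A^n$ and $B$ commute and are semisimple, they are simultaneously diagonalizable; choose a basis realizing the eigenspace decomposition $\CC^r = \bigoplus_\mu V_\mu$ of $A^n$. Then $A$ (which commutes with $A^n$) and $B$ both preserve this decomposition, $A|_{V_\mu}$ is an $n$-th root of $\mu \cdot \Id_{V_\mu}$, and $B|_{V_\mu} \in \GL(V_\mu)$, with a global $\SL$-determinant constraint. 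This identifies each stratum with a fibration whose base records the multiset $\{\mu\}$ of $n$-th-power eigenvalues and whose fiber over a fixed partition is a product of pieces of the form $\{(A_\mu,B_\mu) : A_\mu^n = \mu\Id\} / \GL(V_\mu)$, modulo the residual Weyl-type symmetries that permute equal blocks.

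The second step is the fiberwise retraction. On each block $V_\mu$, apply the polar decomposition $B|_{V_\mu} = K_\mu P_\mu$ (with $K_\mu$ unitary and $P_\mu$ positive Hermitian in the $A|_{V_\mu}$-invariant inner product) and deform $B|_{V_\mu}$ along $K_\mu P_\mu^{1-t}$. Simultaneously, retract each eigenvalue $\lambda$ of $A|_{V_\mu}$ along $\lambda \mapsto \lambda / |\lambda|^{1-t}$, which preserves the $n$-th power relation $A|_{V_\mu}^n = \mu \cdot \Id$ provided one renormalises the common scalar $\mu$ along $\mu \mapsto \mu/|\mu|^{1-t}$, and rescale globally to enforce $\det(A) = \det(B) = 1$. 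On the open, regular stratum (all $\mu$ distinct, all blocks one-dimensional), this reduces to the torus-level argument behind the $r=2$ case and clearly lands in $X(H_n, \SU(r))$. On deeper strata, the same formulas give a well-defined retraction of the stratum onto its $\SU(r)$-counterpart.

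The hard part --- and this is the obstacle the authors themselves flag --- is verifying that these stratum-wise retractions glue into a continuous map on all of $X(H_n, \SL(r,\CC))$. The subtlety is that as $t$ varies, eigenvalues of $A^n$ may collide (the radial rescaling $\mu \mapsto \mu/|\mu|^{1-t}$ can identify two different $\mu$'s at some time $t_0$), so the trajectory genuinely passes through lower strata, and the parametrisation by blocks is not continuous across these jumps. To handle this, I would work on the closure of each stratum inside $X(H_n, \SL(r,\CC))$, showing that the formula above, read as a map into the whole character variety rather than into a fixed stratum, extends continuously across collisions because the limit of a retraction in the regular stratum matches the retraction defined intrinsically on the coarser stratum where the limit lives. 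A clean way to encode this is to perform the retraction in two time scales --- first retract all $P_\mu$ and moduli $|\mu|$ linearly, then deal with the resulting unitary/phase part --- and to use the fact, proved earlier in the excerpt, that the closures of strata are invariant under the retraction at the level of isomorphism classes. Once continuity across closures is established, the map $F_t$ is a strong deformation retract by construction, and the $r=2$ theorem from Section~\ref{sec:homotopy-type} serves both as the base of a possible induction on block sizes and as a consistency check for the formulas.
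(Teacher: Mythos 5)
The statement you are addressing is stated in the paper as a \emph{conjecture}: the authors prove the deformation retraction only for $r=2$ (Theorem \ref{thm:retract}) and explicitly record that for higher rank the intersection pattern of the semisimple strata is too involved for them to carry out the argument. Your proposal is therefore not being measured against an existing proof, and as written it does not close the conjecture either: the decisive step --- showing that the blockwise formulas glue to a continuous map on all of $X(H_n,\SL(r,\CC))$ --- is precisely the open difficulty, and in your text it is announced (``I would work on the closure of each stratum, showing that the formula \dots extends continuously across collisions'') rather than established. The ``fact proved earlier'' that you invoke, namely that stratum closures are preserved by the retraction, is only verified in the paper for $r=2$; there is no such statement for general $r$, so it cannot be used as an ingredient.

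There are also concrete problems with the fiberwise construction itself. First, you cannot assume $B$ is semisimple: semisimplicity of the representation forces $A^n$ to be scalar on each irreducible block (so $A$ is diagonalizable), but $B$ restricted to a block can be a nontrivial Jordan matrix (already visible in the paper's $r=2$ component $\S\mathbf{F}_\CC$, where $b c\neq 0$ allows non-diagonalizable $B$). Polar decomposition does not need semisimplicity, but the more serious point is that $B\mapsto K_\mu P_\mu^{1-t}$ is defined on a chosen representative, and the normal form is unique only up to conjugation by the residual (non-compact) stabilizer, e.g.\ $\GL(V_\mu)$ or, after diagonalizing $A$, a product of $\GL$'s and tori; polar decomposition is not equivariant under non-unitary conjugation, so your homotopy is not obviously well defined on the GIT quotient, nor is it clear that it respects the S-equivalence identifications of non-closed orbits. (The paper's $r=2$ proof avoids this by passing to invariant coordinates $(a,d,p=bc)$ on the quotient before retracting.) Minor but symptomatic: your eigenvalue rescaling $\lambda\mapsto\lambda/|\lambda|^{1-t}$ runs in the wrong direction ($t=0$ already lands on $S^1$), and note that for irreducible $\SL(r,\CC)$-representations the eigenvalues of $A$ are already roots of unity, so the rescaling only acts on reducible strata --- exactly where the collision/gluing analysis you defer actually lives. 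In short, your outline is a reasonable strategy consistent with the authors' own suggestion that the homotopy must move through neighbouring strata in the closure, but it is a programme, not a proof, and the conjecture remains open.
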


Finally, we would like to point out that, even though throughout this paper all these spaces will be studied from a purely representation-theoretic approach, they have been traditionally related with moduli spaces of parabolic bundles. Indeed, the work of Furuta and Steer \cite{FurutaSteer} shows an algebro-geometric approach to the study of the topology of the moduli space of representations of the fundamental group of a Seifert fibered space into $\SU(r)$. 
This idea applies to the twisted Hopf links addressed in present work. Twisted Hopf links are $(2, 2n)$-torus links, that is $2$-bridge links which satisfy Burde and Murasugi theorem \cite[Theorem 1]{burde-murasugi}, so that their complement is a Seifert fibered $3$-sphere over an orbifold with one singular point of order $n$. 

In \cite[Section 6.7]{boden}, Boden shows that, given such Seifert fibered homology $3$-sphere $M$, there exists an orbifold surface $\Sigma$ such that the character variety $X(\pi_1(M), \SU(2))$ is isomorphic to $X^*(\pi_1^{\textrm{orb}}(\Sigma), \SU(2))$, the moduli space of irreducible representations of the orbifold fundamental group of $\Sigma$. This later space can be actually understood as an union of moduli spaces of stable parabolic bundles with appropriate parabolic data \cite{Mehta-Seshadri}. From this viewpoint, our work honours the work of Newstead in \cite{newstead67} in the study of the homotopy of moduli spaces of bundles. 

The aforementioned relation with moduli spaces of parabolic bundles is completed with an analogous relation between $\SL(2,\CC)$-character varieties of Seifert fibered homology $3$-spheres and moduli spaces of traceless parabolic Higgs bundles, thanks to the work of Nasatyr and Steer \cite{NasatyrSteer}. This would motivate us to seek a deformation retraction from the moduli space of traceless parabolic Higgs bundles to the moduli space of degree $0$ stable parabolic bundles. We do not expect it to happen, as it is in fact the case in the non-parabolic situation (see \cite{FlorentinoGothenNozad}). The reason may be the same: the moduli space of traceless parabolic Higgs bundles retracts to its well-known nilpotent cone, which contains the moduli space of degree $0$ stable parabolic bundles as one of its irreducible components, but it is indeed reducible. Nevertheless, we shall address this question in future work.

\subsection*{Acknowledgements}

The first author has been supported by the Madrid Government (Comunidad de Madrid – Spain) under the Multiannual Agreement with the Universidad Complutense de Madrid in the line Research Incentive for
Young PhDs, in the context of the V PRICIT (Regional Programme of Research and Technological Innovation) through the project PR27/21-029, COMPLEXFLUIDS research grant sponsored by the BBVA Foundation and by the Ministerio de Ciencia e Innovaci\'on Project PID2021-124440NB-I00 (Spain). The second author has been supported by the Ministerio de Ciencia e Innovaci\'on Project PID2021-124440NB-I00 (Spain),  Santander-UCM PR44/21-29924 research grant and by the National Science Foundation under grant No.\ DMS-1928930 while she  was in residence at the Simons Laufer Mathematical Sciences Institute (previously known as MSRI) Berkeley, California, during Fall 2022 Semester. The fourth 
author has been partially supported by Ministerio de Ciencia e Innovaci\'on Project PID2020-118452GB-I00 (Spain).

\section{Representation spaces and character varieties}
 We fix notation and outline here some of the properties that will be used throughout the paper. Let us consider a finitely generated group $\Gamma = \langle \gamma_1, \ldots, \gamma_k \,|\, r_\lambda(\gamma_1, \ldots, \gamma_k) \rangle$, where $r_\lambda(\gamma_1, \ldots, \gamma_k)$ is the set of relations satisfied by the generators $ \gamma_1, \ldots, \gamma_k$. Given an algebraic linear group $G$, the $G$-representation variety of $\Gamma$ is the space
 $$
 R(\Gamma,G)= \Hom(\Gamma,G) = \lbrace (g_1, \ldots, g_k) \in G^k \mid r_\lambda(g_1, \ldots, g_k)=\Id \rbrace.
 $$
 For our purposes, we shall focus on the classical matrix cases $G=\GL(r,\CC), \SL(r,\CC), \U(r)$ and $\SU(r)$. Given $\rho\in R(\Gamma,G)$, we will call $\rho$ \textit{reducible} if there exists some proper linear
subspace $W\subset \CC^r$ that is invariant for all $\g \in \G$, that is, such that
$\rho(\Gamma)(W)\subset W$; and we will say that $\rho$ is \textit{irreducible} otherwise. In the case that there exists a $\rho$-equivariant decomposition $\CC^r = W_1 \oplus \ldots \oplus W_s$ into irreducible components, $\rho$ is said to be \emph{semisimple}.

There is a natural action of the group $G$ on $ R(\Gamma,G)$ given by $g \cdot \rho=g\rho g^{-1}$ for $g \in G$ and $\gamma \in \Gamma$, namely the conjugation action. Two representations are said to be equivalent if they belong to the same $G$-orbit under the conjugation action. The moduli space of $G$-representations of $\Gamma$ is the GIT quotient (see \cite{newstead70} for an introduction to these algebraic quotients) with respect to this action, 
$$
X(\Gamma,G)=R(\Gamma,G)\sslash G.
$$
It is well known (see \cite{newstead70}) that $X(\Gamma,G)$ parametrizes representations up to an equivalence relation called S-equivalence: two representations $\rho$ and $\rho'$ are S-equivalent if the closures of their $G$-orbits intersect. It turns out that, for the classical groups $G=\GL(r,\CC), \SL(r,\CC)$, every representation is S-equivalent to a semisimple one. Furthermore, in the compact case $G = \U(r)$ and $\SU(r)$, every representation is semisimple (see \cite[Proposition 1]{gonzalezmartinezmunoz2022}). Hence, in all these cases, $X(\Gamma,G)$ parametrizes semisimple representations up to (classical) equivalence.

\subsection{Representations of the twisted Hopf link group}
We study here representations of the following group. For $n \geq 1$, let $H_n \subset S^3$ 
be the so-called twisted Hopf link, a generalization of the classical Hopf link with $2n$ crossings instead of $2$, as represented in Figure \ref{fig0}.

Using its Wirtinger presentation, in \cite[Proposition 3.1]{gonzalezmunoz2022hopf} it was shown that the fundamental group of the complement of the link $H_n$ is
$$
	\Gamma_n = \pi_1(S^3-H_n) = \langle a,b \mid \left[a^n, b\right] = 1\rangle,
$$
where $[x,y] = xyx^{-1}y^{-1}$ denotes the group commutator. In this way, the associated $G$-representation variety is
 $$
 R(\Gamma_n,G)= \Hom(\Gamma_n, G) = \lbrace (A,B)\in G^2 \mid \left[A^n,B \right]=\Id \rbrace.
 $$
 
In this paper we shall study the corresponding representation spaces and character varieties of $\Gamma_n$ for $G=\U(r)$ and $G=\SU(r)$. We will denote them as
$$
X_{r}=X(\Gamma_{n},\U(r)),\qquad  \S X_{r}=X(\Gamma_{n},\SU(r)).
$$

\subsection{Stratification of representations}

Let us fix $r \geq 1$. Recall that a partition $\pi$ of $r$ is a decomposition
$$
r=a_1r_1+\ldots+a_sr_s\, ,
$$
where $r_1>r_2>\ldots>r_s>0$ and $a_i\geq 0$. 
We shall denote it by $\pi=(r_1,\overset{(a_1)}{\ldots},r_1,\ldots,r_s,\overset{(a_s)}{\ldots},r_s)$. Associated to this partition, we can consider equivalence classes of semisimple representations $\rho: \Gamma_n \to \U(r) $ of type $\pi$, described as:
\begin{equation}
\rho=\bigoplus_{t=1}^{s}\bigoplus_{l=1}^{a_l} \rho_{tl}, \quad \rho_{tl}: \Gamma_n \rightarrow \U(r_t). \label{subrep}
\end{equation}
We denote the set of such representations by $X^{\pi}_r \subset X_r = X(\Gamma_{n},\U(r))$.
Analogously, for $\SU(r)$ we set $\S X^\pi_r = X^\pi_r \cap \S X_r$, which is made of those representations in (\ref{subrep}) with $\prod_{t,l} \det(\rho_{tl})=1$.

As a consequence, the character varieties $X_{r}$ and $SX_{r}$ decompose into locally closed subvarieties indexed by the set $\Pi_r$ of all partitions of $r$:
$$
X_{r}=\bigsqcup_{\pi\in\Pi_r} {X}^{ \pi}_r, \qquad \S X_{r}=\bigsqcup_{\pi\in\Pi_r} \S X^{\pi}_r.
$$

Among all possible partitions, the following two cases will be important: $X^{\textrm{TR}}_r = X_r^{\pi_1}$, the set of \emph{totally reducible} representations, that corresponds to $\pi_1=(1,\overset{(r)}{\ldots},1)$, and $X^{\ast}_r = X^{\pi_0}_r$, which will denote the set of \emph{irreducible} representations, where $\pi_0=(r)$. Analogous notation will be used for the $\SU(r)$ counterparts.

Finally, given a (real) algebraic variety $X$, let us consider the symmetric product of $r$ copies of $X$, $\Sym^r(X) = X^r/ S_r$, where the symmetric group $S_r$ acts by permutation of the factors. With this symmetric product, we have the following simple characterization of semisimple representations up to equivalence.

\begin{lem}\label{cor:reducible-structure}
For any partition $\pi=(r_1,\overset{(a_1)}{\ldots},r_1,\ldots,r_s,\overset{(a_s)}{\ldots},r_s) \in \Pi_r$, we have an isomorphism
$$
	{X}^\pi_r \cong \prod_{i=1}^s \Sym^{a_i}(X_{r_i}^*).
$$
\end{lem}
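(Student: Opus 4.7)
The plan is to construct an explicit morphism
$$\Phi \colon \prod_{i=1}^s \Sym^{a_i}(X^*_{r_i}) \longrightarrow X^\pi_r, \qquad \Phi\bigl(([\rho_{i1}],\ldots,[\rho_{ia_i}])_{i=1}^s\bigr) = \Bigl[\bigoplus_{i,l} \rho_{il}\Bigr],$$
given by taking direct sums, and then check that it is a bijective morphism of real algebraic varieties. Well-definedness is immediate: conjugating each irreducible summand $\rho_{il}$ by some $g_{il}\in \U(r_i)$ amounts to conjugating the direct sum by the block-diagonal matrix $\diag(g_{il})\in \U(r)$, and permuting the $\rho_{il}$ for fixed $i$ is realized by the corresponding block-permutation matrix, which is also unitary. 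That $\Phi$ is an algebraic map follows since direct sum is polynomial in the matrix entries and the symmetric product is defined precisely to quotient out by the ambiguity just described.

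Surjectivity of $\Phi$ is essentially the definition of $X^\pi_r$: every class in $X^\pi_r$ has a representative written as in (\ref{subrep}), whose summands supply a preimage. For injectivity, suppose $\rho=\bigoplus_{i,l} \rho_{il}$ and $\rho'=\bigoplus_{i,l}\rho'_{il}$ lie in the same $\U(r)$-orbit, witnessed by $g\in \U(r)$. Both expressions realise the same semisimple representation $\rho$, and the isotypic decomposition of $\rho$ is intrinsic: its isotypic component of isomorphism class $[\sigma]$ equals $\Hom_{\Gamma_n}(\sigma,\CC^r)\otimes \sigma$. Since the blocks of different index $i$ have different dimensions $r_i$, the summands with fixed $i$ must be matched among themselves. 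Within a fixed $i$, the grouping into isomorphism classes of the $\rho_{il}$ coincides with that of the $\rho'_{il}$, and two ways of ordering a given isotypic component of multiplicity $m$ differ by an element of $\U(m)\subset \U(mr_i)$ that permutes (up to unitary change) the chosen irreducible copies. Consequently, the multisets $\{[\rho_{i1}],\ldots,[\rho_{ia_i}]\}$ and $\{[\rho'_{i1}],\ldots,[\rho'_{ia_i}]\}$ agree in $\Sym^{a_i}(X^*_{r_i})$ for each $i$, proving injectivity.

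Finally, to promote the bijection to an isomorphism of varieties, note that an inverse to $\Phi$ is furnished, locally on $X^\pi_r$, by the block decomposition of a normal-form semisimple representative: choose a unitary frame adapted to the isotypic decomposition and read off the $a_i$-tuples of irreducible blocks of size $r_i$. This is algebraic on each stratum $X^\pi_r$, which suffices. The identical argument applies in the special-determinant case to yield $\S X^\pi_r$; the product $\prod_{i,l}\det(\rho_{il})=1$ constraint is preserved under the bijection, so one obtains $\S X^\pi_r$ as the corresponding subvariety cut out inside $\prod_i \Sym^{a_i}(X^*_{r_i})$.

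The main obstacle is the injectivity step: one must invoke Schur's lemma and the uniqueness of the isotypic decomposition to argue that the only ambiguity in writing a semisimple representation of type $\pi$ as a direct sum of irreducibles is the permutation of the summands of each fixed dimension $r_i$, which is exactly what the symmetric product $\Sym^{a_i}$ accounts for. Everything else in the proof is formal.
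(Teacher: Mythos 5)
The paper states this lemma without proof, presenting it as a ``simple characterization'' of semisimple representations up to equivalence, so there is no written argument of theirs to compare against. Your proof --- the direct-sum map $\Phi$, well-definedness via block-diagonal and block-permutation unitaries, surjectivity by definition of $X^\pi_r$, injectivity from Schur's lemma together with the fact that the $r_i$ are pairwise distinct so the isotypic multiset in each dimension is intrinsic, and a locally defined block-decomposition inverse --- is correct and is exactly the standard argument the authors are leaving implicit.
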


\subsection{Irreducible representations}\label{sec:irreducible}

First, let us observe the following useful property of irreducible representations of the twisted Hopf link group.
\begin{lem}\label{lem:schur}
Let $\rho = (A,B): \Gamma_n \to G$ be an irreducible representation. Then $A^n$ is a multiple of the identity. \label{prop:irreduciblecondition}
\end{lem}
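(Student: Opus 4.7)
The plan is to apply Schur's lemma after observing that $A^n$ commutes with the entire image of $\rho$. The defining relation of $\Gamma_n$ is $[a^n,b] = 1$, which directly means that $A^n$ and $B$ commute in $G$. Trivially, $A^n$ also commutes with $A$ itself. Since $A$ and $B$ generate $\Gamma_n$, the element $A^n$ therefore commutes with every element of the image subgroup $\rho(\Gamma_n) \subseteq G$.

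Next I would invoke Schur's lemma in the following form: if $\rho: \Gamma_n \to \GL(r,\CC)$ is an irreducible representation, then any element of $\GL(r,\CC)$ that commutes with $\rho(\G_n)$ must be a scalar multiple of the identity. This is standard: the eigenspaces of any such commuting element are $\rho(\G_n)$-invariant subspaces, so irreducibility forces there to be only one eigenspace, namely all of $\CC^r$. Applying this to $A^n$ yields $A^n = \lambda \Id$ for some $\lambda \in \CC^\times$.

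The statement is phrased for a general algebraic group $G$, but in the context of this paper $G$ is always one of $\GL(r,\CC)$, $\SL(r,\CC)$, $\U(r)$ or $\SU(r)$, all of which sit inside $\GL(r,\CC)$; so the argument above applies verbatim, and in the unitary/special cases we additionally get $|\lambda| = 1$ or $\lambda^r = 1$ respectively. I do not foresee any real obstacle here: the lemma is essentially a one-line consequence of the relation $[A^n,B]=\Id$ together with Schur's lemma, and the only thing to be careful about is making explicit that irreducibility of $\rho$ is meant as irreducibility of the induced $\CC^r$-module structure, so that Schur's lemma is available.
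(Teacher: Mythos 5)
Your proof is correct and takes essentially the same route as the paper: both observe that the relation $[A^n,B]=\Id$ (together with $[A^n,A]=\Id$) makes $A^n$ an endomorphism commuting with the whole image of $\rho$, i.e.\ a $\rho$-equivariant map $\CC^r\to\CC^r$, and then conclude by Schur's lemma that $A^n$ is a scalar. Your write-up merely unpacks the Schur argument a little more explicitly than the paper does.
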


\begin{proof}
The relation in $\Gamma_{n}$ says that $A^n$ is an equivariant linear map from $\mathbb{C}^{r}$ to $\mathbb{C}^{r}$. Hence, by 
Schur's lemma, $A^n$ is a multiple of the identity.
\end{proof}

Consider the natural projection map
$$
	\omega: X_r^* \to \Sym^r(S^1).
$$ 
that assigns, to each irreducible representation $(A, B) \in X_r^{\ast}$, the collection of eigenvalues of the matrix $A$.
To study this map, we stratify $\Sym^r(S^1)$ according to the number of repeated eigenvalues. To be precise, given a partition $\sigma = (r_1,\overset{(a_1)}{\ldots},r_1,\ldots,r_s,\overset{(a_s)}{\ldots},r_s) \in \Pi_r$, let us denote by $\Sym^r_{\sigma}(S^1)$ the collection of sets $\{\lambda_1, \ldots, \lambda_r\}$ such that there are $a_i$ groups of $r_i$ equal eigenvalues (i.e.\ $a_i$ eigenvalues with multiplicity $r_i$). In particular, $\sigma_0 = (1,\overset{(r)}{\ldots},1)$ corresponds to the case of different eigenvalues. In this way, given $\sigma \in \Pi_r$, we set
$$
	{X}_{\sigma}^\ast = \omega^{-1}(\Sym^r_{\sigma}(S^1)).
$$
These are equivalence classes of representations $(A,B)$ such that $A$ has repeated eigenvalues given by $ \sigma$. Hence, we get a further stratification of the irreducible locus ${X}_{r}^*$ according to the number of repeated eigenvalues
\begin{equation}\label{eq:strata-type}
	{X}_{r}^* = \bigsqcup_{\sigma \in \Pi_r} {X}_{\sigma}^\ast.
\end{equation}
Notice that some of the strata ${X}_{\sigma}^\ast$ may be empty in this decomposition. The maximal component of ${X}_{r}^\ast$ corresponds to the partition $\sigma_0 = (1,\overset{(r)}{\ldots},1)$ of non-repeated eigenvalues. Provided that $n\geq r$, this stratum is non-empty: once we diagonalize $A$ the irreducibility of the representation ensures that we can choose  two non-coincident basis for the eigenvectors of $A$ and $B$, which provides an element of the stratum.

Given $\sigma \in \Pi_r$, let us pick $\{\lambda_1, \ldots, \lambda_r\} \in \Sym^r_\sigma(S^1)$, and let us denote by $\Stab(\sigma)$ the $\U(r)$-stabilizer of the diagonal matrix $A_\sigma = \textrm{diag}(\lambda_1, \ldots, \lambda_r)$. Notice that $\Stab(\sigma)$ does not depend on the particular choice of eigenvalues $\{\lambda_1, \ldots, \lambda_r\} \in \Sym^r_\sigma(S^1)$. Indeed, if $\sigma = (r_1,\overset{(a_1)}{\ldots},r_1,\ldots,r_s,\overset{(a_s)}{\ldots},r_s)$, we have the explicit description
$$
	\Stab(\sigma) = \prod_{i = 1}^s \SU(r_i)^{a_i}.
$$

\subsubsection{$\SU(r)$-irreducible representations}\label{sec:irreducible-su}
If we specialize to the case of $\SU(r)$, directly from Lemma \ref{lem:schur} we get the following.

\begin{cor}
Let  $\rho = (A,B): \Gamma_n \to \SU(r)$ be an irreducible representation. Then $A^n=\xi\Id$, where $\xi \in \bm{\mu}_r$ is an $r$-th root of unity. In particular, $A$ admits finitely many eigenvalues.
\end{cor}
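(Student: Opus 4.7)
The corollary is a short deduction from Lemma \ref{lem:schur}, so my plan is to simply combine Schur's lemma with the determinant constraint imposed by $\SU(r)$.

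First I would invoke Lemma \ref{lem:schur} to get $A^n = \xi\, \Id$ for some scalar $\xi \in \CC$. Since we are now working in $\SU(r)$, we have $\det(A) = 1$, hence
$$
\xi^r = \det(\xi\, \Id) = \det(A^n) = \det(A)^n = 1,
$$
which means $\xi \in \bm{\mu}_r$, the group of $r$-th roots of unity. This gives the first assertion.

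For the second assertion, I would note that the identity $A^n = \xi\, \Id$ says that $A$ satisfies the polynomial equation $x^n - \xi = 0$. Therefore the minimal polynomial of $A$ divides $x^n - \xi$, and its roots (which are the eigenvalues of $A$) lie in the finite set of $n$-th roots of $\xi$. In particular, the eigenvalues of $A$ belong to the finite group $\bm{\mu}_{nr}$ of $(nr)$-th roots of unity, proving finiteness.

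There is no real obstacle here: the only nontrivial input is Schur's lemma, already used in the proof of Lemma \ref{lem:schur}, and the constraint $\det A = 1$ is built into the definition of $\SU(r)$. The entire argument is two lines once Lemma \ref{lem:schur} is in hand.
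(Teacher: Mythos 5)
Your proof is correct and is exactly the deduction the paper intends: the paper states this as a direct consequence of Lemma \ref{lem:schur} without writing out the argument, and the determinant computation $\xi^r = \det(A)^n = 1$ together with the observation that eigenvalues of $A$ are $n$-th roots of $\xi \in \bm{\mu}_r$ is the obvious way to fill it in.
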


This result implies that the map
$$
	\omega: \S X_r^* \to \SSym^r(S^1)
$$ 
has finite image, where $\SSym^r(S^1) \subset \Sym^r(S^1)$ is the collection of sets $\{\lambda_1, \ldots, \lambda_r\} \in \Sym^r(S^1)$ such that $\lambda_1 \cdots \lambda_r = 1$. The image is precisely those eigenvalues additionally satisfying $\lambda_1^n=\lambda_2^n=\ldots=\lambda_r^n \in \bm{\mu}_r$. Analogously to the previous setting, we also have stratifications
\begin{equation}\label{eq:stratification-eigen-rep}
	\SSym^r(S^1) = \bigsqcup_{\sigma \in \Pi_r} \SSym^r_\sigma(S^1), \qquad \S X_r^\ast = \bigsqcup_{\sigma \in \Pi_r} \S X_{\sigma}^*,
\end{equation}
 according to the number of repeated eigenvalues.

Given $\sigma \in \Pi_r$, let us denote by $N_\sigma$ the number of elements $\{\lambda_1, \ldots, \lambda_r\} \in \SSym^r_\sigma(S^1)$ such that $\lambda_1^n = \lambda_2^n = \ldots = \lambda_r^n = \xi \in \bm{\mu}_r$ (and $\lambda_1 \cdots \lambda_r=1$). Fixed $\xi\in \bm{\mu}_r$, the number of possibilities was computed in \cite[Corollary 6.7]{SL4torusknot} for the case $\textrm{gcd}(n,r) = 1$. Hence, accounting for the $r$ choices of $\xi$, we get that
$$
	N_\sigma =  \frac{r}{n} \begin{pmatrix}n \\ a_1, a_2, \ldots, a_s\end{pmatrix},
$$
at least for $n$ and $r$ coprime (conjecturally, for any $n$ and $r$). Here, we have used the multinomial coefficients
$$
	\begin{pmatrix}n \\ a_1, a_2, \ldots, a_s\end{pmatrix} = \frac{n!}{a_1! a_2! \ldots a_s! (n - a_1 - \ldots - a_s)!}.
$$

\begin{lem}\label{lem:irreducible}
Let $\sigma \in \Pi_r$. We have that $SX_\sigma^*$ is a disjoint union of $N_\sigma$ copies of a subspace
$$
	\S \mathbf{F}_\sigma \subset \SU(r)/\Stab(\sigma),
$$
where the action of $\Stab(\sigma)$ on $\SU(r)$ is by conjugation.
\end{lem}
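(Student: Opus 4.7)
The plan is to analyze $\S X_\sigma^*$ via the restricted eigenvalue map $\omega$. As discussed in the text preceding the lemma, the image of $\omega\big|_{\S X_\sigma^*}$ is a finite subset of $\SSym^r_\sigma(S^1)$ consisting of exactly $N_\sigma$ unordered tuples $\{\lambda_1, \ldots, \lambda_r\}$ of type $\sigma$ satisfying $\lambda_1 \cdots \lambda_r = 1$ and $\lambda_1^n = \cdots = \lambda_r^n = \xi \in \bm{\mu}_r$. This immediately produces a decomposition
$$
\S X_\sigma^* = \bigsqcup_{\ell=1}^{N_\sigma} \omega^{-1}\bigl(\{\lambda_1^{(\ell)}, \ldots, \lambda_r^{(\ell)}\}\bigr)
$$
into $N_\sigma$ disjoint pieces, so it remains to identify each fiber with a single model $\S \mathbf{F}_\sigma$.

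For each $\ell$, fix an ordering in which equal eigenvalues are grouped into consecutive blocks of sizes dictated by $\sigma$, and set $A_\sigma^{(\ell)} = \diag(\lambda_1^{(\ell)}, \ldots, \lambda_r^{(\ell)})$. Any class $[(A,B)]$ in this fiber admits a representative with $A = A_\sigma^{(\ell)}$: since $A$ is unitary with the prescribed eigenvalues, there exists $g \in \SU(r)$ with $gAg^{-1} = A_\sigma^{(\ell)}$, and conjugating the pair replaces $B$ by $gBg^{-1}$. Crucially, $(A_\sigma^{(\ell)})^n = \xi_\ell \Id$ is scalar, so the relation $[A^n, B] = \Id$ is automatic, and \emph{any} $B \in \SU(r)$ is admissible, subject only to the irreducibility of the pair $(A_\sigma^{(\ell)}, B)$.

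Two such representatives $(A_\sigma^{(\ell)}, B)$ and $(A_\sigma^{(\ell)}, B')$ become equivalent in $\S X_\sigma^*$ precisely when there exists $h \in \SU(r)$ with $hA_\sigma^{(\ell)}h^{-1} = A_\sigma^{(\ell)}$ and $hBh^{-1} = B'$, i.e.\ when $h$ lies in the $\SU(r)$-stabilizer of $A_\sigma^{(\ell)}$, identified with $\Stab(\sigma)$. Consequently, each fiber is the image in the conjugation quotient $\SU(r)/\Stab(\sigma)$ of the locus of $B$ for which $(A_\sigma^{(\ell)}, B)$ is irreducible; this is by definition $\S \mathbf{F}_\sigma$. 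Since both the stabilizer and the irreducibility condition depend only on the block structure $\sigma$, the identification produces the same model $\S \mathbf{F}_\sigma$ for each of the $N_\sigma$ fibers.

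The main subtlety I anticipate is justifying that the model space $\S \mathbf{F}_\sigma$ is genuinely independent of the choice of eigenvalue tuple of type $\sigma$. This reduces to the observation that, if two tuples of type $\sigma$ are reordered to match the same block pattern, the associated diagonal matrices are conjugate by a permutation matrix (suitably rescaled by a central scalar to lie in $\SU(r)$), and this conjugation induces an isomorphism of stabilizers intertwining the two irreducibility loci inside $\SU(r)$. A secondary technical point is to confirm that one uses the $\SU(r)$-stabilizer rather than the $\U(r)$-stabilizer when passing to the $\SU(r)$ quotient, but since $\Stab(\sigma)$ already acts by conjugation on $\SU(r)$ in the required way, the conjugation quotient $\SU(r)/\Stab(\sigma)$ is well-defined as stated and the description of $\S\mathbf{F}_\sigma$ as the irreducibility locus therein is canonical.
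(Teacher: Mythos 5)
Your proof is correct and follows essentially the same approach as the paper's: diagonalize $A$ using a unitary (which may be taken in $\SU(r)$ after scaling by a central root of unity), observe that the residual freedom is exactly conjugation by $\Stab(A) = \Stab(\sigma)$, and sum over the $N_\sigma$ admissible eigenvalue tuples. The paper's proof is just a terser version of yours; the extra details you supply (that the relation $[A^n,B]=\Id$ becomes vacuous once $A^n$ is scalar, and that $\S\mathbf{F}_\sigma$ depends only on the block pattern and not on the specific eigenvalues) are sound and fill in steps the paper leaves implicit.
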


\begin{proof}
Let $(A,B) \in SX_\sigma^*$. Since $A$ is diagonalizable, $(A,B)$ has a representative with the matrix 
$A = \textrm{diag}(\lambda_1, \ldots, \lambda_r)$ for some $\{\lambda_1, \ldots, \lambda_r\} \in \SSym^r_\sigma(S^1)$. This form is unique up to the action of $\Stab(A) = \Stab(\sigma)$ by conjugation on $B$, so it defines a point in $\SU(r)/\Stab(\sigma)$, as claimed.
\end{proof}

\begin{rem}\label{rmk:SFtau}
The subspace $\S \mathbf{F}_\sigma$ corresponding to $\S X_\sigma^*$ can be characterized explicitly. Let $\S\mathbf{F}_\sigma^0$ by the preimage of $\S \mathbf{F}_\sigma$ under the projection map $\SU(r) \to \SU(r)/\Stab(\sigma)$. Then $\S \mathbf{F}_\sigma^0$ is given by the classes of matrices $B \in \SU(r)$ such that $B$ has no  proper invariant subspace in common with $A = \textrm{diag}(\lambda_1, \ldots, \lambda_r)$. Notice that the invariant subspaces of $A$ are certain subspaces generated by the canonical basis vectors of $\CC^r$, depending on the partition $\sigma$. 
\end{rem}

\subsubsection{$\SU(r)$-irreducible representations with distinct eigenvalues}\label{sec:distinct-eigenvalues}

In this section, we shall further study the case of the partition $\sigma_0 = (1,\overset{(r)}{\ldots},1)$ corresponding to non-repeated eigenvalues. In this setting, $\S \mathbf{F}_{\sigma_0}^0$ is the collection of orthonormal bases $\{b_1, \ldots, b_r\}$ of $\CC^r$ with volume $1$ such that $\langle b_{i_1}, \ldots, b_{i_k}\rangle \neq \langle e_{i_1}, \dots, e_{i_k} \rangle$ for any proper subset $\{i_1, \ldots, i_k\} \subset \{1, \ldots, r\}$. Moreover $\Stab(\sigma_0) = (S^1)^r$ with the action $(\lambda_1, \ldots, \lambda_r) \cdot (b_{ij}) = (\lambda_i\lambda_j^{-1} b_{ij})$ for $(\lambda_1, \ldots, \lambda_r) \in (S^1)^r$ and $B = (b_{ij}) \in \SU(r)$.

Let us first analyze the space $\SU(r) / (S^1)^r$ with the previous action. Recall that $\SU(r)$ can be understood as a fiber bundle over $S^{2r-1}$ (the choice of the first basis vector) whose fiber is a fiber bundle over $S^{2r-3}$ (the choice of the second basis vector), whose fiber is again a fiber bundle over $S^{2r-5}$ and so on so forth until the last basis vector, that in principle belongs to $S^1$ but is actually fixed by the volume condition.

However, the action of $(S^1)^r$ makes things more involved. Given a matrix $B = (b_1 \,|\, \ldots \, |\, b_r)$, where $b_j = (b_{1j}, \ldots, b_{rj})$ are its column vectors, the action allows us to arrange $b_{i1} \in \RR_{\geq 0}$ for all $i \geq 2$. Such representant is unique, so we get a projection map
$$
	\varphi: \SU(r)/(S^1)^r \longrightarrow B_r
$$
 onto the ``coarse orthant'' (c.f. \cite[Theorem 10]{gonzalezmartinezmunoz2022})
$$
	B_r = \{(z, x_2, \ldots, x_r) \in \CC \times \RR_{\geq 0}^{r-1} \,|\, |z|^2 + x_2^2 + \ldots + x_r^2 = 1\} \subset S^{r} \subset S^{2r-1}.
$$
The fiber of $\varphi$ over $b=b_1 = (z, x_2, \ldots, x_r)$ is determined by the number of ways in which $b$ can be completed to an orthonormal basis of volume $1$. On the interior of $B_r$ the fiber $\varphi^{-1}(b)$ is an iterated bundle of spheres, but when $b$ belongs to the boundary of $B_r$, there remains a residual action of $G_b = \{(\lambda_1, \ldots, \lambda_r) \in (S^1)^r \,|\, \lambda_1 = \lambda_i \textrm{ if } x_i = 0 \textrm{ for } 2 \leq i \leq r\}$ acting on $S^{2r-3}\times \ldots \times S^3$.

If we restrict our attention to $\S \mathbf{F}_{\sigma_0} = \S \mathbf{F}_{\sigma_0}^0/(S^1)^r$, then we must remove those bases that lead to a reducible representation. Such bases only occur at the boundary of $B_r$, so the previous description also holds for $\S \mathbf{F}_{\sigma_0}$ on the interior of $B_r$.

\subsubsection{$\U(r)$-irreducible representations}\label{sec:ur-irreducible}

The case of $\U(r)$-representations is analogous to the previous one, and we get an eigenvalue map
$$
	{\omega}: X_r^* \to \Sym^r(S^1).
$$ 
However, in this case the map no longer has finite image since, for a configuration of eigenvalues $\{\lambda_1, \ldots, \lambda_r\} \in \Sym^r(S^1)$ in the image, the element $\xi=\lambda_1^n=\lambda_2^n=\ldots=\lambda_r^n$ is an arbitrary point of $S^1$. We start the analysis of these representations with the simplest case.
\begin{lem}\label{lem:reducible-structure}
We have ${X}_1^* \cong S^1 \times S^1$.

\begin{proof}
Since $\U(1) = S^1$ is an abelian group, any pair $(A,B) \in \U(1) \times \U(1) = S^1 \times S^1$ leads to a representation for the twisted Hopf link. Moreover, since they are $1$-dimensional representations, they are automatically irreducible. 
\end{proof}
\end{lem}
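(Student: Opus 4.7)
The plan is to observe that everything in sight degenerates trivially in rank one, so the lemma is essentially a tautology. First I would unpack the representation variety: since $\U(1) = S^1$ is abelian, the commutator $[A^n, B]$ vanishes automatically for every pair $(A, B) \in \U(1) \times \U(1)$, so the defining relation of $\Gamma_n$ imposes no constraint. Hence
\[
R(\Gamma_n, \U(1)) = \{(A, B) \in \U(1)^2 \mid [A^n, B] = \Id\} = \U(1) \times \U(1) = S^1 \times S^1.
\]

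Next I would pass to the GIT quotient. Because $\U(1)$ is abelian, the conjugation action on $R(\Gamma_n, \U(1))$ is trivial, so
\[
X_1 = R(\Gamma_n, \U(1)) \sslash \U(1) = S^1 \times S^1.
\]

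Finally I would verify that $X_1^* = X_1$. A $1$-dimensional representation has no nonzero proper subspaces at all, so the only invariant subspaces of $\CC$ under $\rho(\Gamma_n)$ are $0$ and $\CC$ itself. By the definition of irreducibility given earlier in the paper, every $(A, B) \in R(\Gamma_n, \U(1))$ is therefore irreducible, giving $X_1^* = X_1 \cong S^1 \times S^1$.

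There is no real obstacle here; the content of the lemma is entirely in assembling the three observations (commutativity kills the relation, commutativity kills the conjugation action, and one-dimensional representations are automatically irreducible). This base case will feed into the later symmetric-product descriptions via Lemma \ref{cor:reducible-structure}.
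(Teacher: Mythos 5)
Your proof is correct and follows essentially the same route as the paper: commutativity of $\U(1)$ makes the defining relation vacuous, commutativity makes the conjugation action trivial, and $1$-dimensional representations are automatically irreducible. You spell out the GIT quotient step a bit more explicitly than the paper does, but the content is identical.
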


The higher rank case $r > 1$ is more involved. As a first step, we have an analogous result to Lemma \ref{lem:irreducible}.

\begin{prop}\label{prop:ur-irreducible}
Fix $\sigma \in \Pi_r$. There is a Zariski locally trivial fiber bundle
$$
	\omega_\sigma: {X}_\sigma^* \to \Sym^r_{\sigma}(S^1)
$$
with fiber ${\mathbf{F}}_\sigma \subset \U(r)/\Stab(\sigma)$ and base the collection of eigenvalues $\{\lambda_1, \ldots, \lambda_r\} \in \Sym^r_\sigma(S^1)$ such that $\lambda_1^n=\lambda_2^n=\ldots=\lambda_r^n$.
\end{prop}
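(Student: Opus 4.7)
The plan is to adapt the argument used in Lemma \ref{lem:irreducible} for the $\SU(r)$ case to the present $\U(r)$ setting. The essential new feature is that the common $n$-th power $\xi = \lambda_1^n = \cdots = \lambda_r^n$ is no longer forced to lie in the finite set $\bm{\mu}_r$, so the set of admissible eigenvalue tuples becomes a positive-dimensional real algebraic variety and the decomposition of ${X}^*_r$ upgrades from a disjoint union to a genuine fiber bundle.

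The first step is to define the map $\omega_\sigma$ by sending $[(A,B)]$ to the unordered tuple of eigenvalues of $A$, and to verify that its image lies in the claimed subset. This follows from Lemma \ref{lem:schur}: since $(A,B)$ is irreducible, $A^n$ is a scalar multiple of the identity, so any two eigenvalues of $A$ have the same $n$-th power. The second step is to identify the fiber. Fix a tuple $\{\lambda_1, \ldots, \lambda_r\}$ in the base and set $A_\sigma = \diag(\lambda_1, \ldots, \lambda_r)$. Because every unitary matrix is diagonalizable via a unitary change of basis, any class in $\omega_\sigma^{-1}(\{\lambda_i\})$ has a representative of the form $(A_\sigma, B)$, and two such representatives differ by the conjugation action of $\Stab(A_\sigma) = \Stab(\sigma)$ on $B$. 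The resulting class $[B] \in \U(r)/\Stab(\sigma)$ must correspond to an irreducible representation, which cuts out the subset $\mathbf{F}_\sigma \subset \U(r)/\Stab(\sigma)$, characterized as in Remark \ref{rmk:SFtau} by the condition that $B$ share no proper invariant subspace with $A_\sigma$.

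The main step, and the expected obstacle, is establishing Zariski local triviality over the base. The plan is to cover the base by Zariski open subsets $U$ on which the $r$ eigenvalues admit an algebraic ordering, so that one obtains a section $s\colon U \to \U(r)$ given by $\{\lambda_i\} \mapsto \diag(\lambda_1, \ldots, \lambda_r)$. A trivialization $\omega_\sigma^{-1}(U) \cong U \times \mathbf{F}_\sigma$ is then produced by sending $[(A,B)]$ to $(\omega_\sigma[(A,B)],\, [g^{-1}Bg])$, where $g \in \U(r)$ is any element conjugating $A$ to $s(\omega_\sigma[(A,B)])$. Two valid choices of $g$ differ by right multiplication by an element of $\Stab(\sigma)$, so the output $[g^{-1}Bg] \in \U(r)/\Stab(\sigma)$ is well-defined, and it lies in $\mathbf{F}_\sigma$ automatically by the irreducibility of $(A,B)$. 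The principal subtlety is producing the diagonalizing element $g$ algebraically in families; this reduces to the standard local triviality of the principal $\Stab(\sigma)$-bundle $\U(r) \to \U(r)/\Stab(\sigma)$ together with the algebraicity of the ordered-eigenvalue map on the stratum $\Sym^r_\sigma(S^1)$, both of which are known in the real algebraic category.
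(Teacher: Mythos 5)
Your proof is correct and follows essentially the same route as the paper's: identify the fiber by conjugating $A$ to diagonal form using a local algebraic section of the spectrum map, note that the residual ambiguity is the conjugation action of $\Stab(\sigma)$ on $B$, and obtain Zariski local triviality over $\Sym^r_\sigma(S^1)$. You spell out a few points the paper leaves implicit (the role of Lemma \ref{lem:schur} in cutting out the image, and the reduction to local triviality of the principal $\Stab(\sigma)$-bundle $\U(r)\to\U(r)/\Stab(\sigma)$), but the underlying argument is the same.
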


\begin{proof}
The spectrum map $\sigma: \U(r) \to \Sym^r_{\sigma}(S^1)$ is locally trivial in the Zariski topology. Indeed, on a trivializing open set $U \subset \Sym^r_{\sigma}(S^1)$ we may conjugate each representation $(A,B)$ in $\omega_\sigma^{-1}(U)$ so that $A = \diag(\lambda_1, \ldots, \lambda_r)$. In this form, $B \in \U(r)$ is uniquely determined up to the conjugacy action of $\Stab(A) = \Stab(\sigma)$.
\end{proof}

\begin{rem}\label{rmk:Ftau}
Again, the subspace ${\mathbf{F}}_\sigma$ can be characterized explicitly in full analogy with $\S \mathbf{F}_\sigma$ as in Remark \ref{rmk:SFtau}, with the only difference that now $B \in \U(r)$ instead of $B\in \SU(r)$.
\end{rem}

To look closer at this fibration, let us consider the variety $\hat{X}_r^* = X_r^* \times_{\Sym^r(S^1)} (S^1)^r$ given as the pullback
	\[
\begin{displaystyle}
   \xymatrix
   {
   \hat{X}_r^* \ar[r] \ar[d]& X_r^* \ar[d]^{{\omega}_\sigma} \\
   (S^1)^r \ar[r] & \Sym^r(S^1) 
      }
\end{displaystyle}   
\]
where $(S^1)^r \to \Sym^r(S^1)$ is the quotient map. Analogously, if we want to restrict the number of coincident eigenvalues, we set $\hat{X}_\sigma^* =  {X}_\sigma^* \times_{\Sym^r_\sigma(S^1)} (S^1)^r$ for $\sigma \in \Pi_r$, and we denote by $\Sigma_\sigma \subset (S^1)^r$ the image $\hat{X}_\sigma^* \to (S^1)^r$.

\begin{lem}
The fibration $\hat{X}_\sigma^* \to \Sigma_\sigma$ is trivial, so we have an isomorphism
\begin{equation}\label{eq:fibration-ur-irr}
	\hat{X}_\sigma^* \cong {\mathbf{F}}_\sigma \times \Sigma_\sigma.
\end{equation}
\end{lem}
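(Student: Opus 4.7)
The plan is to construct an explicit global trivialization $\Psi\colon \mathbf{F}_\sigma \times \Sigma_\sigma \to \hat{X}_\sigma^*$. The underlying idea is that while $\omega_\sigma\colon X_\sigma^* \to \Sym^r_\sigma(S^1)$ from Proposition~\ref{prop:ur-irreducible} is only Zariski locally trivial in general (with monodromy controlled by $N_{\U(r)}(\Stab(\sigma))/\Stab(\sigma)$), pulling back along the quotient map $(S^1)^r \to \Sym^r(S^1)$ equips every spectrum with an actual ordering, and this extra data is precisely what is needed to pick a canonical diagonal representative $A_\lambda = \diag(\lambda_1,\ldots,\lambda_r)$ in each equivalence class of representations. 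Concretely, I would first decompose $\Sigma_\sigma$ into ``pattern pieces'': for every (unordered) partition $\mathcal{P}$ of $\{1,\ldots,r\}$ whose block sizes realise $\sigma$, let $\Sigma_\sigma^{\mathcal{P}} \subset \Sigma_\sigma$ denote the subset of tuples $\lambda$ whose equality pattern is exactly $\mathcal{P}$. These pieces are clopen in $\Sigma_\sigma$, because crossing between two distinct patterns would force the multiplicity type to degenerate, which is forbidden by membership in $\Sigma_\sigma$.

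On each piece, the stabilizer of $A_\lambda$ is the fixed block-diagonal subgroup $\Stab(\mathcal{P}) \subset \U(r)$ associated to $\mathcal{P}$. I would pick once and for all a permutation $\tau_{\mathcal{P}} \in S_r$ carrying the reference partition $\mathcal{P}_\sigma$ attached to $\Stab(\sigma)$ to $\mathcal{P}$, with associated permutation matrix $P_{\mathcal{P}}$, so that $P_{\mathcal{P}}\,\Stab(\sigma)\,P_{\mathcal{P}}^{-1} = \Stab(\mathcal{P}) = \Stab(A_\lambda)$ for every $\lambda \in \Sigma_\sigma^{\mathcal{P}}$. The trivialization on each piece is then defined by
$$
\Psi_{\mathcal{P}}([B],\lambda) \;=\; \bigl([(A_\lambda,\, P_{\mathcal{P}} B P_{\mathcal{P}}^{-1})],\; \lambda\bigr),
$$
for $\lambda \in \Sigma_\sigma^{\mathcal{P}}$ and $[B] \in \mathbf{F}_\sigma \subset \U(r)/\Stab(\sigma)$. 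Well-definedness follows from the identity $P_{\mathcal{P}}\,h B h^{-1}\,P_{\mathcal{P}}^{-1} = h' (P_{\mathcal{P}} B P_{\mathcal{P}}^{-1}) h'^{-1}$ with $h' := P_{\mathcal{P}} h P_{\mathcal{P}}^{-1} \in \Stab(A_\lambda)$, so that conjugation by $h'$ preserves the class $[(A_\lambda,\cdot)]$ in $X_\sigma^*$; the irreducibility condition of Remark~\ref{rmk:Ftau} is preserved because conjugation by the permutation matrix $P_{\mathcal{P}}$ bijects the canonical invariant subspaces for $A_\sigma$ with those for $A_\lambda$. The inverse on each piece sends $([(A,B)],\lambda)$ to $([P_{\mathcal{P}}^{-1} g B g^{-1} P_{\mathcal{P}}],\lambda)$ for any $g \in \U(r)$ with $gAg^{-1}=A_\lambda$, and a symmetric calculation confirms it is inverse to $\Psi_{\mathcal{P}}$. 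Assembling the $\Psi_{\mathcal{P}}$ over the disjoint pattern pieces yields the desired global isomorphism $\Psi$.

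The main technical obstacle is the combinatorial bookkeeping around the choice of $\tau_{\mathcal{P}}$, since it is only unique up to the subgroup of $S_r$ preserving $\mathcal{P}_\sigma$ as an unordered partition; that subgroup contains permutations swapping blocks of equal size, whose permutation matrices lie in $N_{\U(r)}(\Stab(\sigma)) \setminus \Stab(\sigma)$ and therefore would give genuinely different trivializations. What makes this manageable is precisely the disjointness of the pattern pieces: it is enough to fix $\tau_{\mathcal{P}}$ once and for all on each piece, so no compatibility condition needs to be checked across different pieces to assemble a well-defined global $\Psi$.
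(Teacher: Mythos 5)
Your proof is correct and, in spirit, takes the same route as the paper: the ordering supplied by the pullback to $(S^1)^r$ lets one fix the canonical diagonal matrix $A_\lambda = \diag(\lambda_1,\ldots,\lambda_r)$ in each class, which trivializes the bundle. The paper's own proof is a one-liner that jumps from ``a canonical representative with $A=\diag(\lambda_1,\ldots,\lambda_r)$'' directly to ``$B\in\mathbf{F}_\sigma$'', implicitly identifying $\U(r)/\Stab(A_\lambda)$ with $\U(r)/\Stab(\sigma)$ even though $\Stab(A_\lambda)$ is only conjugate to, not literally equal to, the reference group $\Stab(\sigma)$ once the repeated entries of $\lambda$ are not contiguous. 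What you add is the missing bookkeeping: the decomposition of $\Sigma_\sigma$ into pattern pieces $\Sigma_\sigma^{\mathcal{P}}$ is genuinely clopen (the multiplicity type is frozen to $\sigma$, so the equality pattern cannot change continuously), and the fixed intertwining permutation $P_{\mathcal{P}}$ on each piece furnishes the isomorphism $\U(r)/\Stab(\sigma)\to\U(r)/\Stab(A_\lambda)$ in a way that, as your well-definedness and inverse computations verify, is compatible with the conjugation action and preserves the irreducibility condition of Remark~\ref{rmk:Ftau}. Your closing observation — that the ambiguity in $\tau_{\mathcal{P}}$ lies in the normalizer $N_{\U(r)}(\Stab(\sigma))$ modulo $\Stab(\sigma)$ and is harmless precisely because the pattern pieces are disjoint — is exactly why the pulled-back bundle trivializes globally while the original $\omega_\sigma$ over $\Sym^r_\sigma(S^1)$ is only Zariski locally trivial. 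So the governing idea agrees with the paper, but your version fills a real gap that the paper's terseness would leave a careful reader wondering about.
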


\begin{proof}
The elements of $\hat{X}_\sigma^*$ are tuples $(A, B, \lambda_1, \ldots, \lambda_r) \in \U(r)^2 \times \Sigma_\sigma$ such that $(A,B)$ is an irreducible representation and the spectrum of $A$ is $\{\lambda_1, \ldots, \lambda_r\}$. Any such representation has a canonical representative with $A = \textrm{diag}(\lambda_1, \ldots, \lambda_r)$ and thus $B \in {\mathbf{F}}_\sigma$, leading to the desired isomorphism. 
\end{proof}

In the case that there exists at least one eigenvalue with multiplicity one, we can describe the fiber of the fibration (\ref{eq:fibration-ur-irr}) in terms of the one for $\SU(r)$.
\begin{prop}
If $\sigma  = (r_1,\overset{(a_1)}{\ldots},r_1,\ldots,r_s,\overset{(a_s)}{\ldots},r_s) \in \Pi_r$ is a partition with $r_1 = 1$ (i.e.\, there exists at least one eigenvalue with multiplicity one), then we have an isomorphism
$$
	{\mathbf{F}}_\sigma \cong \S{\mathbf{F}}_\sigma \times S^1.
$$
\end{prop}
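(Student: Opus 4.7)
My plan is to construct an explicit $\Stab(\sigma)$-equivariant homeomorphism $\Psi : \S\mathbf{F}_\sigma^0 \times S^1 \to \mathbf{F}_\sigma^0$, with $\Stab(\sigma)$ acting by conjugation on the first factor and trivially on $S^1$, and then descend to the respective quotients. The hypothesis that some eigenvalue of $A$ has multiplicity one is exactly what allows us to exhibit a one-parameter subgroup of $\U(r)$ sitting inside $\Stab(\sigma)$ whose determinant surjects onto $S^1$, providing the required splitting.

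Concretely, after reordering the coordinates we may assume $A = \diag(\lambda_1,\ldots,\lambda_r)$ with $\lambda_1$ of multiplicity one, so that the block of the stabiliser acting on the line $\langle e_1\rangle$ is a copy of $\U(1)=S^1$. Then $T(\zeta) := \diag(\zeta,1,\ldots,1)$ lies in $\Stab(\sigma)$, satisfies $\det T(\zeta)=\zeta$, and commutes with every $g\in\Stab(\sigma)$ (both are block-diagonal with respect to the eigenspace decomposition of $A$, and the first block is scalar). The map
$$
\Psi : \SU(r)\times S^1 \longrightarrow \U(r), \qquad (B_0,\zeta)\longmapsto T(\zeta)\,B_0,
$$
is therefore a real algebraic isomorphism with inverse $B\mapsto (T(\det B)^{-1}B,\,\det B)$, and it is $\Stab(\sigma)$-equivariant because $gT(\zeta)=T(\zeta)g$. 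Moreover, by Remarks \ref{rmk:SFtau} and \ref{rmk:Ftau}, membership in $\S\mathbf{F}_\sigma^0$ or $\mathbf{F}_\sigma^0$ is the condition of having no proper invariant subspace in common with $A$, and any such common invariant subspace is a coordinate subspace $W_S=\langle e_i : i\in S\rangle$ respecting the eigenspace decomposition of $A$. Since $T(\zeta)$ is diagonal, it preserves every such $W_S$, so $B_0$ and $T(\zeta)B_0$ share the same common invariant subspaces with $A$; hence $\Psi$ restricts to a bijection $\S\mathbf{F}_\sigma^0 \times S^1 \to \mathbf{F}_\sigma^0$. Taking $\Stab(\sigma)$-quotients and using that the action on the $S^1$ factor is trivial,
$$
\mathbf{F}_\sigma = \mathbf{F}_\sigma^0/\Stab(\sigma) \;\cong\; \bigl(\S\mathbf{F}_\sigma^0 \times S^1\bigr)/\Stab(\sigma) \;=\; \bigl(\S\mathbf{F}_\sigma^0/\Stab(\sigma)\bigr)\times S^1 \;=\; \S\mathbf{F}_\sigma \times S^1.
$$

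I do not foresee any serious obstacle. The entire argument reduces to the observation that the multiplicity-one hypothesis places a natural section of the determinant $\det:\U(r)\to S^1$ inside the commutant of $\Stab(\sigma)$; without such an eigenvalue no diagonal $T(\zeta)$ of determinant $\zeta$ could be chosen within $\Stab(\sigma)$, and the splitting would break down. The only minor technical care is in identifying, via Remarks \ref{rmk:SFtau} and \ref{rmk:Ftau}, the common invariant subspaces of $A$ and an arbitrary $B$ with the coordinate subspaces $W_S$, so that the diagonal matrix $T(\zeta)$ manifestly preserves them.
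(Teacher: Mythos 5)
Your proof is correct and essentially the same as the paper's: both identify the $S^1$ factor with $\det B$ and absorb it via the central $\U(1)\subset\Stab(\sigma)$ sitting at the simple eigenvalue, your $T(\zeta)$ acting by left multiplication (rescaling the first row) where the paper uses right multiplication (rescaling the first column $b_1 \mapsto \mu^{-1}b_1$, $\mu=\det B$) --- either choice commutes with conjugation by $\Stab(\sigma)$. One small imprecision: a common $(A,B)$-invariant subspace need not be a coordinate subspace $W_S$ when $A$ has repeated eigenvalues (e.g.\ $\langle e_1+e_2\rangle$ if $\lambda_1=\lambda_2$), but your conclusion still holds because $T(\zeta)$ restricts to a scalar on each eigenspace of $A$ and therefore preserves \emph{every} $A$-invariant subspace, not just the coordinate ones.
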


\begin{proof} Fix eigenvalues $\{\lambda_1, \ldots, \lambda_r\} \in \Sym^r_\sigma(S^1)$. To simplify the notation, we shall suppose that the first eigenvalue $\lambda_1$ is simple. Given an element $(A,B, \lambda_1, \ldots, \lambda_r) \in {\mathbf{F}}_\sigma$, the representation can be conjugated to one of the form
$$
	(A, B) = (\textrm{diag}(\lambda_1, \ldots, \lambda_r), (b_1 \,|\, b_2 \,|\, \ldots \,|\, b_r)),
$$
where $b_i$ are the column vectors of $B$. Moreover, since $\lambda_1$ is a simple eigenvalue, the vector $b_1$ is well defined up to re-scaling by action of $S^1$, the stabilizer of the first eigenvector of $A$. In this way, if set $\mu = \det(B)$, we have that $B' = (\mu^{-1}b_1 \,|\, b_2 \,|\, \ldots \,|\, b_r) \in \SU(r)/\Stab(\sigma)$. Hence, the map ${\mathbf{F}}_\sigma \ni B \mapsto (B', \mu) \in \S {\mathbf{F}}_\sigma \times S^1$ provides the desired isomorphism.
\end{proof}

Regarding the base of the fibration in (\ref{eq:fibration-ur-irr}), we can easily describe it in combinatorial terms as follows.

\begin{lem}
Given $\sigma = (r_1,\overset{(a_1)}{\ldots},r_1,\ldots,r_s,\overset{(a_s)}{\ldots},r_s) \in \Pi_r $, let $N = a_1 + \ldots + a_s$ be the number of different eigenvalues. Then, we have that
$$
	\Sigma_\sigma \cong S^1 \times \Delta^{N-1}_{\bm{\mu}_n},
$$
where $\Delta^{N-1}_{\bm{\mu}_n}$ is the collection of tuples $(\epsilon_2, \ldots, \epsilon_{N})$ with $\epsilon_i \in \bm{\mu}_n^* = \bm{\mu}_n - \{1\}$ such that $\epsilon_i \neq \epsilon_j$ for $i \neq j$.
\end{lem}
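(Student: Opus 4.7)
The plan is to exhibit an explicit bijection $\Psi: S^1 \times \Delta^{N-1}_{\bm{\mu}_n} \to \Sigma_\sigma$ by exploiting Schur's lemma. The first step is to apply Lemma \ref{lem:schur} to any irreducible representation $(A,B) \in X_\sigma^*$: since $A^n$ must be a scalar, $A^n = \xi\,\Id$ for a common $\xi \in S^1$, and hence every tuple $(\lambda_1,\ldots,\lambda_r) \in \Sigma_\sigma$ satisfies $\lambda_i^n = \xi$ for all $i$. Taking $\lambda := \lambda_1 \in S^1$ as a free parameter produces the $S^1$-factor of the claimed product, and for each $i \geq 2$ there is a unique expression $\lambda_i = \lambda \cdot \epsilon_i$ with $\epsilon_i \in \bm{\mu}_n$ (and $\epsilon_1=1$).

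Next, I would invoke the canonical block ordering of $\Sigma_\sigma$ induced by $\sigma$: the first $r_1$ entries form the first block (all equal to $\lambda$), the following $r_1$ entries form the second block, and so on through the $a_1$ blocks of size $r_1$, then the $a_2$ blocks of size $r_2$, and finally the $a_s$ blocks of size $r_s$. Inside each block the $\epsilon_i$ must agree, and the multiplicity pattern being \emph{exactly} $\sigma$ (rather than some coarser partition) forces the $N = a_1 + \ldots + a_s$ block-values to be pairwise distinct. Since the first block fixes the value $1$, the remaining $N-1$ block-values $(\epsilon_2,\ldots,\epsilon_N)$ form a tuple of pairwise distinct elements of $\bm{\mu}_n^*$, that is, an element of $\Delta^{N-1}_{\bm{\mu}_n}$.

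This assignment is inverse to the map $\Psi$ sending $(\lambda,(\epsilon_2,\ldots,\epsilon_N))$ to the $r$-tuple reconstructed by the block recipe above, and both directions are polynomial in $\lambda$ and in the roots of unity, so $\Psi$ yields the desired isomorphism of real algebraic varieties. The main obstacle is a bookkeeping one, namely fixing that $\Sigma_\sigma$ is read with its canonical block ordering; otherwise the count of connected components would be off by the multinomial factor $\tfrac{N!}{a_1!\cdots a_s!}$ coming from permutations of equal-size blocks. Once this convention is nailed down, the parametrization and its verification are essentially immediate from Schur's lemma together with the combinatorics of $\sigma$.
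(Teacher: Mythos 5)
Your proof is correct and coincides with the paper's: both rely on the constraint from Lemma \ref{lem:schur} that $\lambda_i^n=\xi$ for all $i$, extract $\lambda_1\in S^1$ as a free parameter, and observe that the ratios $\epsilon_i=\lambda_i/\lambda_1$ are pairwise distinct elements of $\bm{\mu}_n$ with $\epsilon_1=1$, so $(\epsilon_2,\ldots,\epsilon_N)$ gives the desired element of $\Delta^{N-1}_{\bm{\mu}_n}$. Your remark about fixing a block ordering to avoid the multinomial overcount is a fair clarification of a convention the paper leaves implicit by working directly with the ordered $N$-tuple of distinct eigenvalues.
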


\begin{proof}
Let $(\lambda_1, \ldots, \lambda_N)$ be the ordered different eigenvalues of an element of $\Sigma_\sigma$. Since $\lambda_1^n = \lambda_2^n = \ldots = \lambda_N^n$, for $i \geq 2$ we can uniquely write $\lambda_i = \lambda_1 \epsilon_i$ for some $\epsilon_i \in \bm{\mu}_n^*$ with all the roots $\epsilon_2, \ldots, \epsilon_N$ different. Hence, the map $(\lambda_1, \ldots, \lambda_N) \mapsto (\lambda_1, \epsilon_2, \ldots, \epsilon_N)$ yields the isomorphism.
\end{proof}

Putting together all these pieces, we finally get an explicit description of ${X}_\sigma^*$, at least in the case where there exists a simple eigenvalue.

\begin{cor}\label{cor:irreducible-u-simplecase}
Let $\sigma = (r_1,\overset{(a_1)}{\ldots},r_1,\ldots,r_s,\overset{(a_s)}{\ldots},r_s)  \in \Pi_r$ be a partition with at least one eigenvalue with multiplicity one, and let $N = a_1 + \ldots + a_s$ be the number of different eigenvalues. Then we have an isomorphism
$$
	{X}_\sigma^* = \left(\Sigma_\sigma \times  {\mathbf{F}}_\sigma \right)/S_n 
	= \left(S^1 \times \Delta^{N-1}_{\bm{\mu}_n} \times  \S{\mathbf{F}}_\sigma \times S^1 \right)/S_n,
$$
where the action of the symmetric group $S_n$ on $S^1 \times \Delta^{N-1}_{\bm{\mu}_n}$ is given by permutation of eigenvalues and on ${\mathbf{F}}_\sigma = \S{\mathbf{F}}_\sigma \times S^1$ by permutation of columns.
\end{cor}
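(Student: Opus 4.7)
The plan is to assemble the corollary directly from the three preceding results in this subsection, namely the trivialization lemma giving $\hat{X}_\sigma^* \cong \mathbf{F}_\sigma \times \Sigma_\sigma$, the splitting $\mathbf{F}_\sigma \cong \S\mathbf{F}_\sigma \times S^1$ available once a simple eigenvalue exists, and the combinatorial description $\Sigma_\sigma \cong S^1 \times \Delta^{N-1}_{\bm{\mu}_n}$. The only thing that is not already done is to descend these identifications through the symmetric quotient $(S^1)^r \to \Sym^r(S^1)$ used to build $\hat{X}_\sigma^*$, and to check that the action compatibility is the one stated in the corollary.

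First I would recall the pullback square defining $\hat{X}_\sigma^* = X_\sigma^* \times_{\Sym^r_\sigma(S^1)} (S^1)^r$, observing that the bottom arrow is the quotient by the permutation action of the Young subgroup $S_\sigma = S_{a_1}[S_{r_1}] \times \cdots \times S_{a_s}[S_{r_s}]$ stabilizing the partition type (here I have deliberately used $S_\sigma$ in place of the paper's $S_n$ to keep the combinatorics honest, and would write the result as $X_\sigma^* = \hat{X}_\sigma^* / S_\sigma$). This gives the basic identity
\[
X_\sigma^* \;=\; \hat{X}_\sigma^* \big/ S_\sigma,
\]
so the task reduces to understanding $\hat{X}_\sigma^*$ and the $S_\sigma$-action on it.

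Next I would substitute in the three earlier identifications in order: the trivialization lemma yields $\hat{X}_\sigma^* \cong \Sigma_\sigma \times \mathbf{F}_\sigma$; the hypothesis that $\sigma$ has at least one part equal to $1$ lets me apply the proposition giving $\mathbf{F}_\sigma \cong \S\mathbf{F}_\sigma \times S^1$ (the $S^1$-factor being recorded by $\mu = \det B$); and the combinatorial lemma turns the base into $\Sigma_\sigma \cong S^1 \times \Delta^{N-1}_{\bm{\mu}_n}$. Stringing these together gives
\[
\hat{X}_\sigma^* \;\cong\; S^1 \times \Delta^{N-1}_{\bm{\mu}_n} \times \S\mathbf{F}_\sigma \times S^1,
\]
and quotienting by $S_\sigma$ produces the stated formula.

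The main thing to be careful about is tracking how the permutation action on $(S^1)^r$ transports through each of the three isomorphisms. On the factor $\Sigma_\sigma$ it is by construction the permutation of the eigenvalue tuple, which after the combinatorial identification becomes a permutation of the $\epsilon_i$ factors (fixing the overall $S^1$ that records $\lambda_1$, up to a re-labelling that can be absorbed). On $\mathbf{F}_\sigma$ the action comes from simultaneous conjugation of $B$ by a permutation matrix, which is exactly permutation of the columns; under the splitting $\mathbf{F}_\sigma \cong \S\mathbf{F}_\sigma \times S^1$ one then notes that $\det B$ is invariant under permutations coming from the symmetric group in $\U(r)$, so the $S^1$-factor of $\mathbf{F}_\sigma$ is untouched. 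These two compatibilities together show that $S_\sigma$ acts diagonally on the product, permuting eigenvalues on the $\Sigma_\sigma$ factor and columns on the $\mathbf{F}_\sigma$ factor as claimed. The only genuinely delicate step is this bookkeeping, since one must verify that conjugating $(A,B)$ by a permutation matrix carries the canonical representative of the diagonalization to another canonical representative and therefore that the quotient is well-defined on the split form; once this is checked the corollary follows formally.
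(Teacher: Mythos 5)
Your reconstruction is the intended one: the corollary has no separate proof in the paper and is meant to follow by chaining the trivialization $\hat{X}_\sigma^* \cong \Sigma_\sigma \times \mathbf{F}_\sigma$, the splitting $\mathbf{F}_\sigma \cong \S\mathbf{F}_\sigma \times S^1$ (which uses the hypothesis $r_1 = 1$), and the identification $\Sigma_\sigma \cong S^1 \times \Delta^{N-1}_{\bm{\mu}_n}$, and then descending through the residual permutation symmetry. You are also right to be suspicious of the $S_n$ in the paper's statement: $n$ is the twist parameter and plays no role here, and the only consistent reading (confirmed by the application in Section 5.1, where $\sigma = (1,1)$ gives a quotient by $\ZZ_2 = S_{a_1}$) is that the group should be $\prod_{i=1}^s S_{a_i}$, permuting the distinct eigenvalues inside each common multiplicity class.

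Two cautions, however. First, your replacement $S_\sigma = S_{a_1}[S_{r_1}] \times \cdots \times S_{a_s}[S_{r_s}]$ clashes with the paper's own notation: Section 2.5 has already reserved $S_\sigma$ for the Young subgroup $S_{r_1}^{a_1}\times\cdots\times S_{r_s}^{a_s}$, which permutes the $r_i$ slots within a single repeated eigenvalue block and therefore acts \emph{trivially} on both $\Sigma_\sigma$ and $\U(r)/\Stab(\sigma)$. Your wreath product contains that trivially-acting part as a normal subgroup, and its effective action is $\prod_i S_{a_i}$ — so the final quotient agrees — but naming the effective group $\prod_i S_{a_i}$ directly would avoid overloading a symbol and make the orbit counting transparent. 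Second, the claim that ``the bottom arrow $(S^1)^r \to \Sym^r(S^1)$ is the quotient by $S_\sigma$'' is not correct as stated: that arrow is the $S_r$-quotient, and its restriction over $\Sym^r_\sigma(S^1)$ has full $S_r$-orbits of size $r!/\prod (r_i!)^{a_i}$, not wreath-product orbits. What makes $X_\sigma^* = \hat{X}_\sigma^*/\prod S_{a_i}$ nevertheless correct is that the lemma computing $\Sigma_\sigma \cong S^1 \times \Delta^{N-1}_{\bm{\mu}_n}$ implicitly replaces $r$-tuples by $N$-tuples of distinct eigenvalues with multiplicities assigned by index, after which the residual symmetry is exactly $\prod_i S_{a_i}$. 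Spelling out this passage — rather than attributing the quotient to the pullback square itself — is where the bookkeeping needs to be made precise. The action-compatibility checks you flag at the end (permutation on the eigenvalue tuple, conjugation by a permutation matrix hence column permutation on $\mathbf{F}_\sigma$, invariance of $\det B$ and hence of the $S^1$-factor, and preservation of canonical representatives) are the right ones and do go through.
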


\subsection{Totally reducible representations}\label{sec:totally-reducible}

In this section, we analyze the representations corresponding to the partition $\pi_1=(1,\overset{(r)}{\ldots},1)$, that is, the spaces of totally reducible representations $X_r^{\textrm{TR}}$ and $\S X_r^{\textrm{TR}}$ for the $\U(r)$ and $\SU(r)$ cases, respectively. By Lemma \ref{lem:reducible-structure} we have that for $G = \U(r)$
$$
	X_r^{\textrm{TR}} = \Sym^r(X_1^*) = \Sym^r(S^1 \times S^1).
$$
Analogously, for $G = \SU(r)$ we get
$$
	\S X_r^{\textrm{TR}} = {\SSym}{}^r(S^1 \times S^1),
$$
where $\SSym^r(S^1 \times S^1)$ is the subset of $\Sym^r(S^1 \times S^1)$ of sets $\{(\lambda_1, \mu_1), \ldots, (\lambda_r, \mu_r)\}$ such that $\lambda_1 \cdots \lambda_r = \mu_1 \cdots \mu_r = 1$.

Our first result in this direction is that the former space is a fibration with fiber the later one.

\begin{prop}\label{prop:symtr}
We have a Zariski locally trivial fibration
$$
	\varpi_\sigma: \Sym^r_\sigma(S^1 \times S^1) \to S^1 \times S^1, \quad 
	\varpi_\sigma(\{(\lambda_1, \mu_1), \ldots, (\lambda_r, \mu_r)\}) = (\lambda_1 \cdots \lambda_r, \mu_1 \cdots \mu_r),
$$
whose fiber is isomorphic to $\SSym{}^r(S^1 \times S^1)$.
\end{prop}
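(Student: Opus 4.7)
The plan is to exhibit $\varpi_\sigma$ as the ``multiplication'' map associated with the group structure on $S^1 \times S^1$ and then trivialize it locally via an $r$-th root section. I will proceed in four steps.

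First, I would observe that the map is well defined because it factors through the elementary symmetric consideration that the product $\prod_i (\lambda_i, \mu_i) = (\prod_i \lambda_i, \prod_i \mu_i)$ is invariant under the $S_r$-action permuting the pairs $(\lambda_i, \mu_i)$. The map is clearly surjective: given any $(\lambda, \mu) \in S^1 \times S^1$, the configuration $\{(\lambda, \mu), (1,1), \ldots, (1,1)\}$ maps to $(\lambda, \mu)$. By the very definition of $\SSym^r(S^1 \times S^1)$, the fiber $\varpi_\sigma^{-1}(1,1)$ is exactly $\SSym^r(S^1 \times S^1)$.

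Next, I would exploit the abelian group structure: on $(S^1 \times S^1)^r$ the fibers of the total multiplication map are precisely the translates of the subgroup $\{\text{tuples with product } (1,1)\}$. Concretely, if $(\alpha, \beta) \in S^1 \times S^1$ is any $r$-th root of $(\lambda, \mu)$, then the map
\[
\{(a_i, b_i)\} \longmapsto \{(\alpha a_i, \beta b_i)\}
\]
is a bijection between $\SSym^r(S^1 \times S^1)$ and $\varpi_\sigma^{-1}(\lambda, \mu)$, commuting with the $S_r$-action since the translation is applied uniformly in $i$.

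Third, to upgrade this to a local trivialization, I would use that the $r$-th power map $S^1 \times S^1 \to S^1 \times S^1$, $(\alpha,\beta) \mapsto (\alpha^r, \beta^r)$, is an unramified $(\ZZ/r)^2$-cover, hence admits Zariski-local sections on a Zariski open cover $\{U_j\}$ of $S^1 \times S^1$. Over each $U_j$, pick a section $\phi_j: U_j \to S^1 \times S^1$, $\phi_j(\lambda,\mu) = (\alpha_j(\lambda,\mu), \beta_j(\lambda,\mu))$, and define
\[
\Phi_j : U_j \times \SSym^r(S^1 \times S^1) \longrightarrow \varpi_\sigma^{-1}(U_j), \qquad \bigl((\lambda,\mu), \{(a_i,b_i)\}\bigr) \longmapsto \{(\alpha_j a_i, \beta_j b_i)\}.
\]
By the second step, $\Phi_j$ is a bijection onto $\varpi_\sigma^{-1}(U_j)$, and it is algebraic in both variables, with algebraic inverse obtained by dividing by $\phi_j$. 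This gives Zariski-local triviality.

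The only mild subtlety I foresee is verifying that the trivialization $\Phi_j$ is compatible with passing to the symmetric quotient, i.e.\ that it descends to a well-defined map from $U_j \times \SSym^r(S^1 \times S^1)$ rather than from $U_j \times (S^1 \times S^1)^r$; this follows because the translation by $\phi_j(\lambda,\mu)$ applies the same scalar $(\alpha_j,\beta_j)$ to every component of the tuple, and therefore commutes with the permutation action of $S_r$. Once this compatibility is noted, the Zariski local triviality is immediate and the fiber is identified with $\SSym^r(S^1 \times S^1)$ as required.
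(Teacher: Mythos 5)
Your proposal takes essentially the same approach as the paper's proof: both identify the fiber over $(1,1)$ with $\SSym^r(S^1\times S^1)$ and trivialize by multiplying every entry of a configuration by a chosen $r$-th root of the base point, the only difference being that the paper writes the root-section out explicitly (as $e^{i\theta/r}$, $e^{i\alpha/r}$ over $(S^1-\{-t\})\times(S^1-\{-s\})$ with $\theta,\alpha\in(-\pi,\pi)$) whereas you invoke the existence of local sections of the $r$-th power cover abstractly. One small point worth noting, which applies equally to the paper's proof: the total space carries the subscript $\sigma$, so strictly speaking the translation argument identifies $\varpi_\sigma^{-1}(\lambda,\mu)$ with $\SSym^r_\sigma(S^1\times S^1)$ rather than the full $\SSym^r(S^1\times S^1)$; the translation does preserve the coincidence type $\sigma$ of the first coordinates since the same scalar is applied to every entry, so the restricted statement holds cleanly.
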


\begin{proof}
The base of the fibration is parametrized by $(t,s)=(\lambda_1 \cdots \lambda_r,\mu_1 \cdots \mu_r) \in S^1\times S^1$, and it is clear that $\SSym{}^r(S^1\times S^1)=\phi^{-1}(1,1)$.
The total space can be trivialized over any $(t,s)\in S^1\times S^1$ via the map $$\phi: \varpi_\sigma^{-1}(t,s) \x (S^1-\{-t\})\times (S^1-\{-s\}) \to \varpi_\sigma^{-1}(S^1-\{-t\}\times S^1-\{-s\}),$$
given by $\phi(\{(\lambda_1, \mu_1), \ldots, (\lambda_r, \mu_r)\},te^{i\theta},se^{i\alpha}) = \{(e^{i\theta/r}\lambda_1, e^{i\alpha/r}\mu_1), \ldots, (e^{i\theta/r}\lambda_r, e^{i\alpha/r}\mu_r)\}$, for $\theta, \alpha \in (-\pi,\pi)$.
\end{proof}

Now, let us consider the projection onto the first component
$$
	\varpi: \Sym^r(S^1 \times S^1) \to \Sym^r(S^1).
$$
As in Section \ref{sec:irreducible-su}, given a partition $\sigma \in \Pi_r$, we shall focus on the subset $\Sym^r_{\sigma}(S^1) \subset \Sym^r(S^1)$ of configurations of points with coincident entries given by $\sigma$, and we set $\Sym^r_\sigma(S^1 \times S^1) = \varpi^{-1}(\Sym^r_\sigma (S^1))$. We analogously consider the determinant $1$ case and denoted by $\SSym^r_\sigma(S^1 \times S^1)$ the corresponding space.

Furthermore, given a partition $\sigma = (r_1,\overset{(a_1)}{\ldots},r_1,\ldots,r_s,\overset{(a_s)}{\ldots},r_s) \in \Pi_r$ of $r$, let $S_\sigma = S_{r_1}^{a_1} \times \cdots \times S_{r_s}^{a_s} < S_r$ be the subgroup of permutations of type $\sigma$. Then, we define
$$
	{\Sym}^{\sigma}(S^1) = (S^1)^r/S_\sigma,
$$
and analogously $\SSym^{\sigma}(S^1) \subset {\Sym}^{\sigma}(S^1)$ for the configurations $\{\mu_1, \ldots, \mu_r\}$ such that $\mu_1 \cdots \mu_r = 1$. Observe in particular that $\Sym^r(S^1) = {\Sym}^{(r)}(S^1)$ and $\SSym^r(S^1) = {\SSym}^{(r)}(S^1)$.

\begin{prop}\label{prop:strati-widesym}
Fixed $\sigma = (r_1,\overset{(a_1)}{\ldots},r_1,\ldots,r_s,\overset{(a_s)}{\ldots},r_s) \in \Pi_r$, the fibration
\begin{equation}\label{eq:strati-widesym}
	 \SSym^r_\sigma(S^1 \times S^1) \to \SSym^r_\sigma(S^1).
\end{equation}
is Zariski locally trivial with fiber $\SSym^{\sigma}(S^1)$.

\begin{proof}
Fix $\{\lambda_1, \ldots, \lambda_r\} \in \SSym^r_\sigma(S^1)$ corresponding to the first factor. The algorithm described in \cite[Proposition 5]{gonzalezmartinezmunoz2022} provides a unique way of choosing a logarithm cut $\alpha$ such that we have a natural ordering $\lambda_{i_1} = e^{2\pi i(\alpha + \theta_1)}, \lambda_{i_2} = e^{2\pi i(\alpha + \theta_2)}, \ldots, \lambda_{i_r} = e^{2\pi i(\alpha + \theta_r)}$ with $\theta_1 \leq \theta_2 \leq \ldots \leq \theta_r$. Hence, the $r$ points in the second factor of the fiber $\varpi^{-1}(\{\lambda_1, \ldots, \lambda_r\})$ are uniquely ordered except in the cases where we have coincident values in the first factor. Hence, to remove this ordering, we need to quotient by the action of $S_\sigma$, as claimed.

Observe that, once a configuration of repeated values fixed, the choice of the logarithm cut $\alpha$ varies algebraically with the points $\{\lambda_1, \ldots, \lambda_r\}$. Indeed, $\alpha$ corresponds to the argument of one of them, and the chosen value only changes when it surpasses another one. Hence, the construction above is locally trivial in the Zariski topology, as stated.
\end{proof}
\end{prop}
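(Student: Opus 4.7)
The plan is to first analyze the fiber set-theoretically and then build the local trivialization by invoking the logarithm-cut algorithm already used in \cite{gonzalezmartinezmunoz2022}.

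First, I would fix a point $\{\lambda_1,\ldots,\lambda_r\}\in \SSym^r_\sigma(S^1)$ and describe the preimage under the map in (\ref{eq:strati-widesym}) explicitly. An element of the preimage is an unordered collection $\{(\lambda_j,\mu_j)\}_{j=1}^{r}$ with $\prod \mu_j=1$. If I choose an ordering of the base point compatible with $\sigma$ (i.e.\ grouping coincident values together), then the $\mu_j$'s form an ordered tuple in $(S^1)^r$ with product one. Two such tuples describe the same element of the preimage if and only if they differ by a permutation that fixes the ordered tuple $(\lambda_1,\ldots,\lambda_r)$, and this permutation group is exactly $S_\sigma=S_{r_1}^{a_1}\times\cdots\times S_{r_s}^{a_s}$. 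Hence the fiber is canonically $\{(\mu_1,\ldots,\mu_r)\in (S^1)^r \mid \prod \mu_j=1\}/S_\sigma = \SSym^{\sigma}(S^1)$.

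Next, to promote this pointwise identification to a Zariski locally trivial fibration, I need a way to order the base configuration $\{\lambda_1,\ldots,\lambda_r\}$ consistently (up to $S_\sigma$) on a Zariski open neighbourhood. This is where the logarithm-cut argument of \cite[Proposition 5]{gonzalezmartinezmunoz2022} enters: on a Zariski open $U\subset \SSym^r_\sigma(S^1)$ one can select a branch cut $\alpha\in S^1$ and write $\lambda_{i_j}=e^{2\pi i(\alpha+\theta_j)}$ with $\theta_1\le\cdots\le\theta_r$, where $\alpha$ depends algebraically on the configuration and only needs to be modified when two arguments cross, which is a Zariski-closed condition. This orders the first factor canonically up to the residual action of $S_\sigma$ on coincident blocks, so using the canonical pairing $\lambda_{i_j}\mapsto \mu_{i_j}$ one obtains a trivialization $\varpi^{-1}(U)\cong U\times \SSym^{\sigma}(S^1)$.

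The main obstacle I anticipate is precisely verifying that the choice of ordering is algebraic and not merely continuous: on the closed loci where two of the $\lambda_j$'s collide, the selection of $\alpha$ must jump in a controlled way and the equivalence by $S_\sigma$ must absorb the ambiguity. Once the logarithm-cut lemma is in hand this is automatic, since the jumping locus is itself Zariski closed and the trivializations on overlapping charts agree up to the $S_\sigma$-quotient that defines the fiber. I would finish by noting that the determinant constraint $\prod\mu_j=1$ is preserved by the trivialization since it is already built into the base and fiber.
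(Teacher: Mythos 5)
Your proposal is correct and follows essentially the same route as the paper's proof: identify the fiber set-theoretically as $(S^1)^r$ with the determinant constraint modulo the residual permutation group $S_\sigma$, then invoke the logarithm-cut algorithm of \cite[Proposition 5]{gonzalezmartinezmunoz2022} to order the base configuration algebraically on Zariski opens, yielding the local trivialization. The only difference is presentational (you spell out the fiber identification a bit more explicitly before addressing local triviality), but the key ideas and the supporting lemma are identical.
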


\begin{rem}\label{rmk:standard-conf}
Notice that the base space $\SSym^r(S^1)$ has already been studied in \cite[Proposition 5]{gonzalezmartinezmunoz2022} and turns out to be isomorphic to the $(r-1)$-simplex
$$
	\Delta_{r-1} = \left\{(u_1, \ldots, u_{r}) \in \RR_{\geq 0}^r \,\left| \, \sum_{i=1}^r u_i = 0 \right.\right\}.
$$
In particular, all the base spaces $ \SSym^r_\sigma(S^1)$ of Proposition \ref{prop:strati-widesym} are simplices and thus contractible, so the fiber bundle (\ref{eq:strati-widesym}) is topologically trivial.
\end{rem}

\begin{rem}\label{rmk:sym-divisors}
Using some basic algebraic geometry, it is possible to give an equivalent description of $\Sym^r(S^1 \times S^1)$. Let us fix an almost-complex structure on $S^1 \times S^1$, which turns it into an elliptic curve $\Sigma$. In this setting, the elements of $\Sym^r(S^1 \times S^1)$ correspond exactly to effective divisors on $\Sigma$ of degree $r$. Under this interpretation, if $\Pic_r(\Sigma)$ is the collection of (holomorphic) line bundles on $\Sigma$ of degree $r$, we can consider the map
\begin{equation}\label{eq:fib-sym}
	\Sym^r(S^1 \times S^1) \to \Pic_r(\Sigma), \qquad D \mapsto \cO(D)
\end{equation}
that sends each divisor $D$ into its induced line bundle $\cO(D)$. Note that, since $\Sigma$ is an elliptic curve, $\Pic_r(\Sigma) \cong S^1 \times S^1$ topologically. The fiber of this map at $\cO(D)$ is precisely the collection of effective line bundles linearly equivalent to $D$, which is the complex projective space $\PP(H^0(\cO(D)))$. Using that the canonical divisor $K$ of $\Sigma$ has degree zero, we get that $\cO(K-D)$ has no global holomorphic sections and thus, by the Riemann-Roch theorem, we get $h^0(\cO(D)) = r$. Hence, we get that (\ref{eq:fib-sym}) is a bundle over $S^1 \times S^1$ with fiber $\CC\PP^{r-1}$.
\end{rem}

\subsubsection{The spaces $\Sym^2(S^1 \times S^1)$ and $\SSym^2(S^1 \times S^1)$}\label{sec:sym-2}

In this section, we shall study more closely configurations of two points in $S^1 \times S^1$. In the first place, $\SSym^2(S^1 \times S^1)$ corresponds to collections of sets $\{(\lambda, \lambda^{-1}), 
(\mu, \mu^{-1})\}$. We stratify this space according to the possible repetitions of partitions $\sigma \in \Pi_2$, as in Proposition \ref{prop:strati-widesym}. Recall from Remark \ref{rmk:standard-conf} that $\SSym^2(S^1) = \overline{\mathbf{I}}=[-1,1]$, the closed interval. Its strata are $\SSym^2_{(1,1)}(S^1) = \mathbf{I} = (-1,1)$, the open interval (different eigenvalues), and $\SSym^2_{(2)}(S^1) = \{-1,1\}$, the endpoints (repeated eigenvalues).
\begin{enumerate}
	\item For $\sigma = (1,1)$ we have different values in the first factor. In this case,we get an open cylinder since
	$$
		\SSym^2_{(1,1)}(S^1 \times S^1) = \SSym^2_{(1,1)}(S^1) \times \SSym^{(1,1)}(S^1) = \mathbf{I}  \times S^1,
	$$
	where we have used that $\SSym^{(1,1)}(S^1) = S^1$ and that the fiber bundle is topologically trivial since the base space $\mathbf{I}$ is contractible.
	\item For $\sigma = (2)$ we have repeated eigenvalues in the first factor. Hence, we get
	$$
		\SSym^2_{(2)}(S^1 \times S^1) = \SSym^2_{(2)}(S^1) \times \SSym^2(S^1) = \{\pm 1\} \times \overline{\mathbf{I}}.
	$$
\end{enumerate}
Hence, globally $\SSym^2(S^1 \times S^1)$ is the cylinder $\overline{\mathbf{I}} \times S^1$ with a gluing in the two boundaries $\{-1,1\} \times S^1$ that collapses each circle into an interval. Hence, $\SSym^2(S^1 \times S^1)$ is topologically (but not smoothly) a $2$-sphere: it is a pillowcase with four orbifold points (see Figure \ref{fig:su-char-var}).

\begin{rem}\label{rem:alternative-SSym2}
An alternative way of understanding $\SSym^2(S^1 \times S^1)$ is as the quotient $\SSym^2(S^1 \times S^1) = (S^1 \times S^1) / \ZZ_2$, where the $\mathbb{Z}_2$ action is given by $(\lambda,\mu) \mapsto (\lambda^{-1},\mu^{-1})$. Regarding the torus as a quotient of $[0,1]^2$, the action becomes $(s,t)\sim (1-s,1-t), s,t\in[0,1]$, whose quotient space is homeomorphic to the 2-sphere.
\end{rem}

With respect to the space $\Sym^2(S^1 \times S^1)$, notice that by Proposition \ref{prop:symtr} we have a description as $S^2$-bundle
$$
	S^2 \to \Sym^2(S^1 \times S^1) \to S^1 \times S^1.
$$
Observe that this perfectly fits with the description in Remark \ref{rmk:sym-divisors} since $\CC\PP^1 \cong S^2$.
The monodromy of this fiber bundle is given by 
$(\lambda,\mu) \mapsto (e^{i\pi}\lambda, e^{i\pi}\mu)=(-\lambda,-\mu)$. In coordinates
$(s,t)$, it is given by $(s,t)\mapsto (s+1/2,t+1/2)$, which is orientation preserving, and interchanges 
the four orbifold points of $S^2$ in pairs.

\subsubsection{The spaces $\Sym^3(S^1 \times S^1)$ and $\SSym^3(S^1 \times S^1)$}\label{sec:sym-3}

Now, let us look at the $3$-fold symmetric product $\SSym^3(S^1 \times S^1)$. We stratify this space according to the number of repeated eigenvalues in the first factor:
	\begin{enumerate}
		\item $\sigma = (1,1,1)$ (three different eigenvalues). In this case, we have
		$$
			\SSym^3_{(1,1,1)}(S^1 \times S^1) = \SSym^3_{(1,1,1)}(S^1) \times \SSym^{(1,1,1)}(S^1) = \mathbf{T} \times S^1 \times S^1,
		$$
		where $\mathbf{T} =  \SSym^3_{(1,1,1)}(S^1)$ is the open $2$-dimensional triangle as described by Remark \ref{rmk:standard-conf}.
		\item $\sigma = (1,2)$ (two coincident eigenvalues). This corresponds to the interior of the edges of the triangle $\mathbf{T}$ above. In this case, we have 
		$$
			\SSym^3_{(1,2)}(S^1 \times S^1) = \SSym^3_{(1,2)}(S^1) \times \SSym^{(1,2)}(S^1).
		$$
		Observe that, solving for the last eigenvalue, we have $\SSym^{(1,2)}(S^1) = \Sym^2(S^1)$, which is a M\"obius band (see \cite[Corollary 6]{gonzalezmartinezmunoz2022}). Hence, over each point of the interior of the edges $\mathbf{T}$ we find a M\"obius band.
		\item $\sigma = (1,3)$ (three equal eigenvalues). This corresponds to the vertices of the triangle $\mathbf{T}$. Now, 
		$$
			\SSym^3_{(3)}(S^1 \times S^1) = \SSym^3_{(3)}(S^1) \times \SSym^{3}(S^1),
		$$
		where this later space is a closed triangle. Hence, over each vertex of $\mathbf{T}$, we attach a triangle. Each of these triangles glues with the M\"obius bands of the incoming edges through their boundaries.
	\end{enumerate}

With respect to the total space $\Sym^2(S^1 \times S^1)$, notice that by Remark \ref{rmk:sym-divisors} we have a description as a fiber bundle
$$
	\CC\PP^2 \to \Sym^3(S^1 \times S^1) \to S^1 \times S^1.
$$

\section{$\SU(2)$-character variety} \label{sec:SU2character}

For $n=2$, there are only two partitions,  $\pi_{0}=(2)$ and $\pi_{1}=(1,1)$, that correspond to the set of irreducible representations and the set of totally reducible representations, respectively.

\begin{enumerate}
\item The partition $\pi_0=(2)$ yields the space of irreducible representations $\S X_2^*$. Notice that the only non-empty stratum for the stratification (\ref{eq:stratification-eigen-rep}) of $\S X_2^*$ corresponds to $\sigma=(1,1)$ since, in the case $\sigma=(2)$, the matrix $A$ of a representation $\rho = (A,B)$ is a multiple of the identity, so every representation is reducible.

Following the notation of Section \ref{sec:irreducible}, the partition $\sigma=(1,1)$ has 
$s=1$, $r_1 = 1$ and $a_1=2$. Hence, according to Lemma \ref{lem:irreducible}, we have that $\S X_{(1,1)}^*$ has as many components $\S \mathbf{F}_{(1,1)}$ as
$$
	N_{(1,1)} = \frac{2}{n}\begin{pmatrix} n \\ 2 \end{pmatrix} = \frac{2}{n} \frac{n!}{2! \, (n-2)!} = n-1.
$$
The stabilizer for this stratum is $\Stab((1,1)) = S^1 \times S^1$. To determine the space $\S \mathbf{F}_{(1,1)} \subset \SU(2)/(S^1 \times S^1)$, we observe that, by the discussion of Section \ref{sec:distinct-eigenvalues}, we have a fibration
$$
	\varphi: \S \mathbf{F}_{(1,1)}  \to B_2 \subset S^2.
$$
On the interior of $B_2$, this map is an isomorphism. On the boundary, we get no irreducible representations since the first vector of the basis must be colinear with $e_1$. Hence, we get that $\S \mathbf{F}_{(1,1)}$ is homeomorphic to $\mathbf{D} = \textrm{Int}(B_2)$, which is a $2$-dimensional open disc.

\begin{rem}
An equivalent way of describing $\S \mathbf{F}_{(1,1)}$ is the following. With the notation of Remark \ref{rmk:SFtau}, we have that $\S \mathbf{F}_{(1,1)}^0$ is the collection of orthonormal bases of $\CC^2$ whose vectors are not proportional to any of the vectors of the canonical basis. Since these bases are determined by the first vector, which can be seen as an element  $(a,b)\in S^3\subset \CC^2$, we get that $\S \mathbf{F}_{(1,1)}^0 = S^3 -S^1 $ is the $3$-sphere minus the $1$-sphere of equation $ b=0$. Now, the action of $S^1$ on $(a,b) \in S^3 \subset \CC^2$ is $\lambda \cdot (a,b) = (a, \lambda b)$, so the quotient is the weighted projective space
$$
	\S \mathbf{F}_{(1,1)}^0/(S^1 \times S^1) = (S^3-S^1)/S^1 = \CC\PP^1_{(0,1)} - \{[1:0]\}.
$$
This latter space can be understood as the set of $(a,b) \in S^3\subset \CC^2$ such that $b \in \RR_{> 0}$ so 
$\CC\PP^1_{(0,1)} - \{[1:0]\}$ is the upper hemisphere of $S^2$ and thus homeomorphic to a disc.
\end{rem}

\item The partition $\pi_1=(1,1)$ corresponds to the set of totally reducible representations $\S X_2^{\textrm{TR}}$. By the results of Section \ref{sec:totally-reducible}, we have $\S X_2^{\textrm{TR}} = \SSym^2(S^1 \times S^1)$, which is topologically (but not smoothly) a $2$-sphere.

\end{enumerate}

Summarising from the discussion above, we have that
 $$
 \S X_{2} = \S X^{(1,1)}_2 \sqcup \S X^{(2)}_2=\S X^{\textrm{TR}}_2 \sqcup \S X^*_{(1,1)}\, .
 $$ 
The global picture of $\S X_2$ is as follows (see also Figure \ref{fig:su-char-var}). First, the space $\S X^{\textrm{TR}}_2$ is a $2$-dimensional sphere. To it, we attach $n-1$ open discs $\mathbf{D}$ corresponding the the components of $\S X^*_{(1,1)}$, each of them attached to $\S X^{\textrm{TR}}_2$ through their boundary $S^1\subset \overline{\mathbf{D}}$. 
These boundaries correspond to reducible representations given by $A\sim \textrm{diag}(\lambda,\lambda^{-1})$ and $B\sim \textrm{diag}(\mu,\mu^{-1})$, where $\lambda\in \bm{\mu}_{2n}^{+}$, $\mu \in S^1$. They all inject into $\S X^{\textrm{TR}}_2$ as $n-1$ disjoint circles. The space is homotopically equivalent to the wedge of $n$ copies of $2$-spheres.

\begin{figure}[ht]
\begin{center}
\includegraphics[width=7cm]{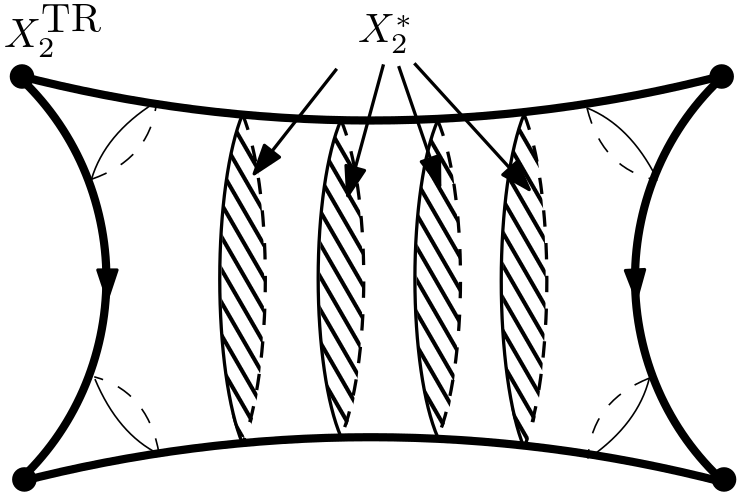}
\caption{The global picture of $X_2 = X(\G_n, \SU(2))$.}\label{fig:su-char-var}
\end{center}
\end{figure}

  \section{Homotopy type of the $\sldos$-character variety of twisted Hopf links}\label{sec:homotopy-type}

In this section, we study the homotopy type of the $\sldos$-character variety of the twisted Hopf link. Recall that this variety is
$$
	X(\G_n,\sldos) = \Hom(\Gamma_n, \sldos) /\hspace{-1mm}/ \sldos 
	= \{(A, B) \in \sldos \,|\, [A^n, B] = \Id \} /\hspace{-1mm}/ \sldos .
$$
This is a complex algebraic variety whose motive has been previously studied in \cite{gonzalezmunoz2022hopf}. The aim of this section is to prove that the natural inclusion map $\S X_2 = X(\G_n,\SU(2)) \hookrightarrow X(\G_n,\sldos)$ is a deformation retract. Notice that $\SU(2) \hookrightarrow \sldos$ is the maximal compact subgroup and that $\sldos$ is the complexification $\sldos = \SU(2)_\CC$ of $\SU(2)$.

As in the $\SU(2)$-case, we can decompose the $\sldos$-character variety into its reducible and irreducible locus
$$
	X(\G_n,\sldos) = X^{\textrm{TR}}(\G_n,\sldos)  \sqcup X^*(\G_n,\sldos).
$$
In sharp contrast with the $\SU(2)$-case, not every reducible representation is semisimple. However, it turns out that, for the conjugacy action of $\sldos$, every representation has a semisimple representation in the closure of its orbit. This implies that every reducible representation is identified in the GIT quotient with a semisimple one, so the reducible locus of the character variety $X^{\textrm{TR}}(\G_n,\sldos)$ parametrizes totally reducible representations up to equivalence, which justifies the assumed notation.

Regarding the irreducible locus $X^*(\G_n,\sldos)$, recall from the results of \cite{gonzalezmunoz2022hopf} that it has $n-1$ components, corresponding to the possible eigenvalues of the matrix $A$ of a representation $(A,B) \in X^*(\G_n,\sldos)$. Fixed eigenvalues $\lambda, \lambda^{-1} \in \CC^*$ for $A$, we can choose a representative of the form
$$
	(A,B) \sim \left(\begin{pmatrix}\lambda & 0 \\ 0& \lambda^{-1}\end{pmatrix}, \begin{pmatrix} a & c \\ b& d\end{pmatrix}\right),
$$
with $bc \neq 0$ since $(A,B)$ is irreducible. This representative is not unique, since we get a residual action of $\CC^*$ on $B \in \sldos$ given by
$$
	\lambda \cdot \begin{pmatrix} a & c \\ b& d\end{pmatrix} = \begin{pmatrix} a & \lambda^{-1}c \\ \lambda b& d\end{pmatrix}, \; \lambda \in \CC^{\ast}.
$$
As a consequence, we get that each of these components of $X^*(\G_n,\sldos)$ is isomorphic to the space 
$$
	\S\mathbf{F}_\CC := \left\{(a,b,c,d) \in \CC^4 \,|\, ad-bc = 1, bc \neq 0\right\} \sslash \CC^*.
$$
The invariant coordinates for this $\CC^*$-action are $p := bc$ and $a,d$, so we finally get the  description
$$
	\S\mathbf{F}_\CC = \left\{(a,d,p) \in \CC^3 \,|\, ad-p = 1, p \neq 0\right\} \cong \CC^2 - H,
$$
where $H$ is the hyperbola $H = \{ad = 1\} \cong \CC^*$ and the later isomorphism is $(a,d,p) \mapsto (a,d)$. Inside this variety, the corresponding component $\S \mathbf{F}$ of the $\SU(2)$-character variety (see Section \ref{sec:SU2character}) is the open disc
$$
	\S\mathbf{F} = \S\mathbf{F}_{(1,1)} = \left\{(a,\bar{a}, |a|^2-1) \in \S\mathbf{F}_\CC \,\,\,|\,\,\, |a| < 1\right\} \cong \mathbf{D}.
$$
Indeed, given $(a, \bar{a}, |a|^2 -1 ) \in \S\mathbf{F}$, the corresponding matrix $B$ is
$$
	B = \begin{pmatrix} a & - \sqrt{1 - |a|^2}\\  \sqrt{1 - |a|^2}& \bar{a}\end{pmatrix}.
$$

\begin{rem}\label{rem:homotopy-original}
The spaces $\S\mathbf{F}$ and $\S\mathbf{F}_\CC$ are not homotopic. A straightforward computation with compactly supported cohomology shows that $H_c^k(\S\mathbf{F}_\CC) = H_c^k(\CC^2 - \CC^*)  = \ZZ$ for $k = 2,3,4$ and $H_c^k(\S\mathbf{F}_\CC) = 0$ otherwise. Hence, the Betti numbers of $\S\mathbf{F}_\CC$ are $b_k(\S\mathbf{F}_\CC) = 1$ for $k = 0,1,2$ and $b_k(\S\mathbf{F}_\CC) = 0$ for $k \geq 3$, showing that $\S\mathbf{F}_\CC$ cannot be contractible, as it is $\S\mathbf{F}$.
\end{rem}

Now, in $\S\mathbf{F}_\CC$, the set $p=0$ corresponds exactly to totally reducible representations in the boundary of the irreducible locus $X^*(\G_n,\sldos)$. Hence, the closure
$$
	\overline{\S\mathbf{F}}_\CC := \left\{(a,d,p) \in \CC^3 \,|\, ad-p = 1\right\} \cong \CC^2,
$$
with coordinates $(a,d) \in \CC^2$, is the collection of representations $(A,B) \in X^*(\G_n,\sldos)$ where $A$ has fixed eigenvalues $\lambda, \lambda^{-1}$. The corresponding closure of the component of the $\SU(2)$-character variety is thus the closed disc
$$
	\overline{\S\mathbf{F}} = \left\{(a,\bar{a}, |a|^2-1) \in \overline{\S\mathbf{F}}_\CC \,\,\,|\,\,\, |a| \leq 1\right\} \cong \overline{\mathbf{D}},
$$
with coordinates $(a,\bar{a}) \in \CC^2$.

\begin{lem}\label{lem:first-retraction}
Let $\overline{\S\mathbf{F}}_\CC' = \left\{(a,d) \in \overline{\S\mathbf{F}}_\CC \,|\, |a| = |d| \right\}$. There exists a smooth homotopy
$$
	H_t: \overline{\S\mathbf{F}}_\CC \to \overline{\S\mathbf{F}}_\CC,
$$
for  $t \in [0,1]$ with $H_0 = \Id_{\overline{\S\mathbf{F}}_\CC}$, $H_1(\overline{\S\mathbf{F}}_\CC) \subseteq \overline{\S\mathbf{F}}_\CC'$, and $H_t|_{\overline{\S\mathbf{F}}_\CC'} = \Id_{\overline{\S\mathbf{F}}_\CC'}$ for all $t$. Under this homotopy, the space $\overline{\S\mathbf{F}}_\CC - \S\mathbf{F}_\CC$ of $\sldos$-reducible representations remains invariant and is rescaled into the space $\overline{\S\mathbf{F}} - \S\mathbf{F}$ of $\SU(2)$-reducible representations. 
\end{lem}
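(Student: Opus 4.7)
The plan is to construct the retraction $H_t$ by rescaling only the \emph{moduli} of $a$ and $d$ while keeping their phases fixed and the product $ad$ constant, so that both the hyperbola of reducible representations $\{ad=1\}$ and (a fortiori) every level set $\{ad = \text{const}\}$ are preserved. Concretely, for $(a,d)$ with $a,d\neq 0$ I would take
\begin{equation*}
H_t(a,d) = \left(a\cdot\left(\frac{|d|}{|a|}\right)^{t/2},\ d\cdot\left(\frac{|a|}{|d|}\right)^{t/2}\right),
\end{equation*}
and regard this as the reparametrized flow of the globally smooth polynomial vector field $V = -\phi\,(a\partial_a - d\partial_d) + \text{c.c.}$, where $\phi(a,d) = |a|^2 - |d|^2$. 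This is (up to scaling) the negative gradient of $\phi^2/4$ with respect to the flat Kähler metric on $\CC^2$.

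Once the formula is in place, the three immediate properties are direct checks. First, $H_0 = \Id$ since the rescaling exponent vanishes at $t=0$. Second, at $t=1$ one has $|a(1)| = |d(1)| = \sqrt{|a||d|}$, so $H_1(\overline{\S\mathbf{F}}_\CC) \subseteq \overline{\S\mathbf{F}}'_\CC$. Third, on $\overline{\S\mathbf{F}}'_\CC$ the ratio $|d|/|a|$ equals $1$, so each rescaling factor is $1$ and $H_t$ fixes $\overline{\S\mathbf{F}}'_\CC$ pointwise for every $t$.

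The assertions about the reducible locus then follow. By construction $a(t)\,d(t) = ad$ for all $t$, so the affine hyperbola $\{ad=1\} = \overline{\S\mathbf{F}}_\CC - \S\mathbf{F}_\CC$ is invariant under the homotopy; equivalently, $V$ is tangent to every level set of $ad$ since $V(ad) = -\phi\,ad + \phi\,ad = 0$. At the endpoint the condition $ad = 1$ combined with $|a(1)| = |d(1)|$ forces $|a(1)|^2 = |a(1)\cdot d(1)| = 1$, hence $|a(1)| = 1$, and consequently $d(1) = 1/a(1) = \overline{a(1)}$, which is exactly the $\SU(2)$-reducible circle $\{|a|=1,\ d=\bar a\} = \overline{\S\mathbf{F}} - \S\mathbf{F}$.

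The only delicate point, and the step I expect to take the most care, is genuine $C^\infty$ smoothness of $H_t$ on all of $[0,1]\times\overline{\S\mathbf{F}}_\CC$. The closed-form expression above is smooth only on the dense open set $\{ad\neq 0\}$, so I would invoke the vector-field description instead: $V$ is polynomial and its flow $\Phi_\tau$ is jointly smooth in $(\tau,a,d)$ for every finite $\tau$, the function $\phi^2$ is a strict Lyapunov function for $\Phi_\tau$ with zero set exactly $\{|a|=|d|\}$, and the linearization of $V$ transverse to this zero set gives exponential decay of $\phi$ away from the origin and polynomial decay on the axes. Composing $\Phi_\tau$ with a reparametrization $\tau(t)$ that is $C^\infty$ and flat at $t=1$ (for instance $\tau(t) = e^{1/(1-t)} - e$) then overwhelms both decay rates and produces a $C^\infty$ homotopy $H_t = \Phi_{\tau(t)}$ on the closed interval, with $H_1$ the limit map into $\overline{\S\mathbf{F}}'_\CC$ and all $t$-derivatives of $H_t$ vanishing at $t=1$.
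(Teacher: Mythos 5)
Your overall strategy agrees with the paper's: keep the product $ad$ and the arguments of $a,d$ fixed while pushing the two moduli toward each other, so that the hyperbola $\{ad=1\}$ of reducible representations stays invariant and lands on the circle $\{|a|=1,\,d=\bar a\}$ at time $1$. The paper does this with a piecewise-linear interpolation of the larger modulus; you propose a geometric-mean interpolation. Unfortunately your closed-form $H_t(a,d) = \bigl(a(|d|/|a|)^{t/2},\, d(|a|/|d|)^{t/2}\bigr)$ has a genuine defect that is not merely about loss of $C^\infty$ regularity: it fails to extend \emph{continuously} over the coordinate axes $\{a=0\}\cup\{d=0\}$, which are honest points of $\overline{\S\mathbf{F}}_\CC \cong \CC^2$ (they correspond to irreducible representations with $p=ad-1=-1$). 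Concretely, fix $d_*\neq 0$. For any $t>0$ the modulus of the second component is $|d|^{1-t/2}|a|^{t/2}\to 0$ as $a\to 0$, so the unique continuous extension along the slice $\{t=\mathrm{const}\}$ is $H_t(0,d_*)=(0,0)$; yet $H_0(0,d_*)=(0,d_*)$. Hence $t\mapsto H_t(0,d_*)$ jumps at $t=0^+$ and there is no continuous extension of $H$ to $[0,1]\times\overline{\S\mathbf{F}}_\CC$. The paper's formula avoids this precisely because it sends $(0,d)$ to $(0,(1-t)d)$, matching continuously (indeed $C^1$) with the generic stratum as $a\to 0$.

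Your fallback to the flow $\Phi_\tau$ of $V=-\phi(a\partial_a-d\partial_d)+\mathrm{c.c.}$ is a reasonable rescue, and it does avoid the jump (on the axis it moves $d$ by $\dot d=-|d|^2d$ rather than collapsing it instantly); but note that $\Phi_\tau$ is a genuinely different map from your closed-form expression — the flow decays $\log(|a|/|d|)$ exponentially in $\tau$, not linearly in $t$ — so the two cannot be ``regarded as'' reparametrizations of one another, and the properties you verified by hand on the closed-form must be re-derived for the flow. Moreover the asserted joint $C^\infty$ smoothness of $\Phi_{\tau(t)}$ at $t=1$ needs much more care than the Lyapunov/flat-reparametrization remark gives: $V$ vanishes to cubic order at the origin and its linearization degenerates on the axes, where convergence is only polynomial at a rate depending on the initial point, so the uniformity required for joint smoothness of the endpoint map is nontrivial. (It is also more than the lemma really needs — continuity already gives a deformation retract, and the paper's own formula is only $C^1$ across $\{|a|=|d|\}$.) To close the gap you should either carry out the continuity/regularity analysis of the reparametrized flow carefully, or adopt an explicit formula, like the paper's piecewise-linear one, that visibly extends over the axes.
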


\begin{proof}
Given $(a,d) \in \overline{\S\mathbf{F}}_\CC = \CC^2$, let us use polar coordinates $a = re^{i\alpha}$ and $d = s e^{i\beta}$, with $r,s \in \RR_{\geq 0}$ and $\alpha, \beta \in [0, 2\pi)$. Consider the auxiliary continuous homotopies $h^1, h^2: \RR_{\geq 0}^2 \times [0,1] \to \RR_{\geq 0}$ given by
$$
	h_t^1(r,s) = \left\{
	\begin{array}{ll}(1-t)r+t \sqrt{rs} & \textrm{for } r \geq s, \\
	\frac{rs}{(1-t)s+t \sqrt{rs}} & \textrm{for } s > r,
	\end{array}
	\right. \quad h_t^2(r,s) = \left\{
	\begin{array}{ll} \frac{rs}{(1-t)r+t \sqrt{rs}} & \textrm{for } r > s, \\
	 (1-t)s+t \sqrt{rs} & \textrm{for } s \geq r.
	\end{array}
	\right.
$$
Observe that for all $r, s \geq 0$ we have $h_0^1(r,s) =r$, $h_0^2(r,s) =s$, $h_1^1(r,s) = h_1^2(r,s) = \sqrt{rs}$. Moreover, we have $h_t^1(r,r) = h_t^2(r,r) = r$ and $h_t^1(r,s) \cdot h_t^2(r,s) = rs$ for all $t \in [0,1]$.

In this setting, we consider the homotopy $H: \overline{\S\mathbf{F}}_\CC \times [0,1] \to \overline{\S\mathbf{F}}_\CC$ given by
$$
	H_t(a,d) = \left( h_t^1(r,s) e^{i\alpha}, h_t^2(r,s) e^{i\beta}\right).
$$ 
Notice that this map makes sense even for $r = 0$ or $s=0$ since, in these cases, $h_t^1(0,s) = 0$ and $h_t^2(r,0)=0$, respectively.
This map satisfies the following properties:
\begin{enumerate}
	\item For $t = 0$ we get, for any $(a,d) \in \overline{\S\mathbf{F}}_\CC$,
$$
	H_0(a,d) =  \left(re^{i\alpha}, se^{i\beta}\right) = (a,d).
$$ 
Thus, $H_0 = \Id_{\overline{\S\mathbf{F}}_\CC}$.
	\item For $t = 1$ we get, for any $(a,d) \in \overline{\S\mathbf{F}}_\CC$,
$$
	H_1(a,d) = \left(\sqrt{rs} \,e^{i\alpha}, \sqrt{rs}\, e^{-i\beta}\right).
$$ 
In particular, $H_1(a,d) \in \overline{\S\mathbf{F}}_\CC'$ for all $a,d \in \overline{\S\mathbf{F}}_\CC$.

	\item For any point of the form $(a = re^{i\alpha}, d = r e^{i\beta}) \in \overline{\S\mathbf{F}}_\CC'$, we get
$$
	H_t(a,d) =  \left(re^{i\alpha}, r e^{i\beta}\right) = (a, d)
$$ 
for all $t \in [0,1]$. In particular, $H_t|_{\overline{\S\mathbf{F}}_\CC'} = \Id_{\overline{\S\mathbf{F}}_\CC'}$.
	\item For a point of the form $(a = re^{i\alpha}, a^{-1} = r^{-1}  e^{-i\alpha}) \in \overline{\S\mathbf{F}}_\CC - {\S\mathbf{F}_\CC}$ (equivalently, for $p=0$), we have
\begin{align*}
	H_t(a,a^{-1}) &=  \left( h_t^1\left(r,r^{-1}\right) e^{i\alpha}, h_t^2\left(r,r^{-1}\right) e^{-i\alpha}\right)
\end{align*}
for all $t \in [0,1]$. Hence, since $h_t^1(r,r^{-1})  \cdot h_t^2(r,r^{-1}) =1$ for all $t$, we have that $H_t(a,a^{-1}) \in \overline{\S\mathbf{F}}_\CC- {\S\mathbf{F}_\CC}$ for all $t$ or, in other words, $\overline{\S\mathbf{F}}_\CC- {\S\mathbf{F}_\CC}$ is invariant. The homotopy there is a rescaling.
\end{enumerate}

Therefore, property (1) shows that this map defines a homotopy equivalence between $H_0 = \Id_{\overline{\S\mathbf{F}}_\CC}$ and $H_1$. Moreover, by properties (2) and (3), $H$ defines a strong deformation retraction onto $\overline{\S\mathbf{F}}_\CC'$. By (4), this homotopy has the desired property on $\overline{\S\mathbf{F}}_\CC- {\S\mathbf{F}_\CC}$.
\end{proof}

\begin{rem}
Since $\overline{\S\mathbf{F}} \subset \overline{\S\mathbf{F}}_\CC'$, this space remains fixed under the previous retraction.
\end{rem}

Figure \ref{fig:retraction} shows the geometric interpretation of the homotopy of the proof of Lemma \ref{lem:first-retraction} in the quadrant $(r,s)$: the hyperbolas $rs = k$, for constant $k$, are retracted onto the plane $\{s = r\}$ through the natural $\RR_{\geq 0}$-action $\lambda \cdot (r,s) = (\lambda^{-1}r, \lambda s)$ for $\lambda \in \RR_{\geq 0}$. In particular, the hyperbola $\overline{\S\mathbf{F}}_\CC - \S\mathbf{F}_\CC = \{rs=1\}$ remains invariant, and the plane $\overline{\S \mathbf{F}}_\CC' = \{r = s\}$ is fixed.

\begin{figure}[ht]
\begin{center}
\includegraphics[width=5cm]{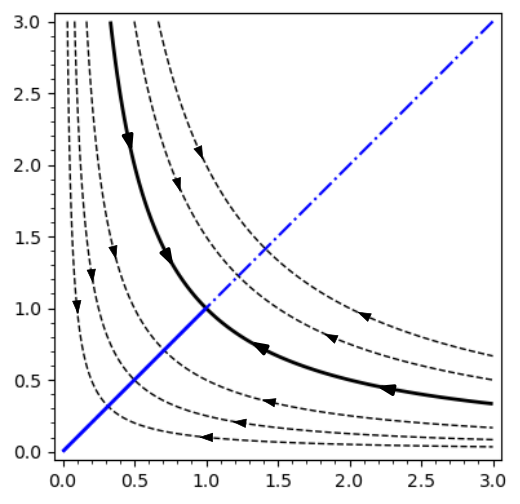}
\caption{\label{fig:retraction} Deformation retract of $\overline{\S\mathbf{F}}_\CC$ onto $\overline{\S\mathbf{F}}_\CC'$ in the $(r,s)$-plane.}
\end{center}
\end{figure}

\begin{lem}\label{lem:angle}
Let $\overline{\S\mathbf{F}}_\CC'' = \left\{(a,\bar{a}) \in \overline{\S\mathbf{F}}_\CC \right\}$. There exists a smooth homotopy
$$
	H_t: \overline{\S\mathbf{F}}_\CC' \to \overline{\S\mathbf{F}}_\CC,
$$
for $t \in [0,1]$ with $H_0 = \Id_{\overline{\S\mathbf{F}}_\CC'}$, $H_1(\overline{\S\mathbf{F}}_\CC') \subseteq \overline{\S\mathbf{F}}_\CC''$, and $H_t|_{\overline{\S\mathbf{F}}_\CC''} = \Id_{\overline{\S\mathbf{F}}_\CC''}$ for all $t$. 
\end{lem}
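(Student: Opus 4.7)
The plan is as follows. Recalling that $\overline{\S\mathbf{F}}_\CC \cong \CC^2$ with coordinates $(a,d)$, and that $\overline{\S\mathbf{F}}_\CC'' = \{(a,\bar{a})\} \subset \overline{\S\mathbf{F}}_\CC'$, I would simply define the straight-line homotopy in the second coordinate,
$$
H_t(a,d) = \left(a,\,(1-t)d + t\bar{a}\right),
$$
for $(a,d) \in \overline{\S\mathbf{F}}_\CC'$ and $t \in [0,1]$. The key observation is that, in contrast with the deformation of Lemma \ref{lem:first-retraction}, the lemma allows the homotopy to take values in the larger space $\overline{\S\mathbf{F}}_\CC \cong \CC^2$ rather than in $\overline{\S\mathbf{F}}_\CC'$ itself, so we need not preserve the condition $|a|=|d|$ along the way. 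This buys us the freedom to use a simple affine interpolation in $\CC$ instead of trying to rotate the phase of $d$ along the circle of radius $|a|$.

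Then the verification is immediate. First, $H_0(a,d) = (a,d)$, so $H_0 = \Id_{\overline{\S\mathbf{F}}_\CC'}$. Second, $H_1(a,d) = (a,\bar{a})$ for every $(a,d)\in \overline{\S\mathbf{F}}_\CC'$, so $H_1(\overline{\S\mathbf{F}}_\CC') \subseteq \overline{\S\mathbf{F}}_\CC''$. Third, if $(a,\bar{a}) \in \overline{\S\mathbf{F}}_\CC''$, then $H_t(a,\bar{a}) = (a,(1-t)\bar{a}+t\bar{a}) = (a,\bar{a})$ for every $t \in [0,1]$, so the homotopy fixes $\overline{\S\mathbf{F}}_\CC''$ pointwise. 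Smoothness is clear since the map is polynomial in $t$ and $d$, and $a \mapsto \bar{a}$ is smooth as a real map $\CC \to \CC$.

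The only conceptual point worth flagging (it is not really an obstacle) is that the naive rotational homotopy $\beta_t = (1-t)\beta - t\alpha$ on angles, writing $a = re^{i\alpha}$ and $d = re^{i\beta}$, would \emph{not} be globally smooth on $\overline{\S\mathbf{F}}_\CC'$: the shortest arc on $S^1$ from $\beta$ to $-\alpha$ is ambiguous precisely where $\beta + \alpha \equiv \pi \pmod{2\pi}$, and any global choice requires a branch cut or a loss of smoothness. Leaving $\overline{\S\mathbf{F}}_\CC'$ via the straight-line homotopy in $\CC$ bypasses this issue entirely, at the cost of enlarging the codomain --- which the statement of the lemma already permits.
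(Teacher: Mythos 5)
Your homotopy $H_t(a,d) = (a,(1-t)d + t\bar{a})$ is exactly the map the paper uses, merely written in Cartesian rather than polar form: with $a = re^{i\alpha}$ and $d = re^{i\beta}$ one has $(1-t)d + t\bar{a} = r\left((1-t)e^{i\beta} + te^{-i\alpha}\right)$, which is the paper's formula. Your presentation is even slightly cleaner, since it handles the point $a=d=0$ without the case split that the paper's polar-coordinate formulation requires, and your closing remark about why the rotational homotopy fails echoes the paper's own Remark following this lemma.
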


\begin{proof}
Let us write $a = re^{i\alpha}$ and $d = r e^{i\beta}$ with $r,s \in \RR_{\geq 0}$ and $\alpha, \beta \in [0, 2\pi)$. We consider the homotopy $H: \overline{\S\mathbf{F}}_\CC' \times [0,1] \to \overline{\S\mathbf{F}}_\CC$ given by
$$
	H_t(a,d) = \left\{\begin{array}{ll}\left( r e^{i\alpha}, r \left((1-t)e^{i\beta} + t e^{-i\alpha} \right)\right) & \textrm{if } a, d \neq 0, \\
	(0,0) & \textrm{if } a = d = 0.
	\end{array}\right. 
$$
Observe that, since $|a| = |d|$ in $ \overline{\S\mathbf{F}}_\CC'$, then any of them vanishes if and only if both vanish.

We obviously have $H_0 = \Id_{\overline{\S\mathbf{F}}_\CC'}$ and $H_1(a,d) = (a, \bar{a}) \in \overline{\S\mathbf{F}}_\CC'' $. Moreover, the points of the form $(a = re^{i\alpha}, \bar{a} = re^{-i\alpha})$ remain fixed, so we directly get that $H$ is the required homotopy.
\end{proof}

\begin{rem}
Since $\overline{\S\mathbf{F}} \subset \overline{\S\mathbf{F}}_\CC''$, this space remains fixed under the previous retraction.
\end{rem}

\begin{rem}
Notice that, throughout the homotopy of Lemma \ref{lem:angle}, the norm of the second component varies, so the target space of this homotopy is the whole $\overline{\S\mathbf{F}}_\CC$ and not the subspace $\overline{\S\mathbf{F}}_\CC'$. This is related to the following fact. Let $\Delta = \{(a,a)\} \subseteq \overline{\S\mathbf{F}}_\CC'$ be the diagonal. The space $\overline{\S\mathbf{F}}_\CC' - \Delta$ can be easily retracted to $\overline{\S\mathbf{F}}_\CC''$ by joining $(a,d)$ with $(a, \bar{a})$ through the shortest arc joining $d$ and $\bar{a}$. However, on $\Delta = \CC$, retracting it into $\overline{\S\mathbf{F}}_\CC''$ is the same as homotopying the conjugation map $f: \CC \to \CC$, $f(a) = \bar{a}$, into the identity map. If we restrict to $f|_{S^1}: S^1 \to S^1$, this is impossible, but it is possible as maps on $\CC$ if we allow the homotopy to pass through $0 \in \CC$ with the simple linear homotopy.
\end{rem}

Now, observe that $\S\mathbf{F}_\CC'' = \left\{(a,\bar{a}) \right\}$ radially retracts onto the disc $\overline{\S\mathbf{F}} = \S\mathbf{F}_\CC'' \cap \{|a| \leq 1\}$. Therefore, we have proven the following result.

\begin{prop}\label{prop:irredretract}
Each of the components $\overline{\S\mathbf{F}}_\CC$ of the closure of the irreducible locus of the $\sldos$-character variety strongly retracts onto the corresponding component $\overline{\S\mathbf{F}}$ of the closure of the irreducible locus of the $\SU(2)$-character variety. On the set of totally reducible representations $\overline{\S\mathbf{F}}_\CC - {\S\mathbf{F}_\CC}$, the homotopy is just linear rescaling onto $\overline{\S\mathbf{F}} - {\S\mathbf{F}}$. 
\end{prop}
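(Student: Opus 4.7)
The plan is to obtain the desired strong deformation retraction by concatenating three homotopies: the retraction of Lemma \ref{lem:first-retraction} from $\overline{\S\mathbf{F}}_\CC$ onto $\overline{\S\mathbf{F}}_\CC'$, the retraction of Lemma \ref{lem:angle} from $\overline{\S\mathbf{F}}_\CC'$ onto $\overline{\S\mathbf{F}}_\CC''$, and a final radial retraction of $\overline{\S\mathbf{F}}_\CC'' \cong \CC$ onto the closed unit disc $\overline{\S\mathbf{F}}$. The third stage contracts $(a, \bar{a})$ with $|a| > 1$ radially towards $(a/|a|, \overline{a/|a|}) \in \overline{\S\mathbf{F}} - \S\mathbf{F}$ while fixing $\overline{\S\mathbf{F}}$ pointwise; one concrete choice is $(a, \bar a) \mapsto (\phi_t(|a|) e^{i \arg a}, \phi_t(|a|) e^{-i \arg a})$ with $\phi_t(r) = \min(r, (1-t)r + t)$.

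To glue these into a single homotopy $G_t \colon \overline{\S\mathbf{F}}_\CC \to \overline{\S\mathbf{F}}_\CC$ for $t \in [0,1]$, I will reparametrize so that the three stages run on $[0, 1/3]$, $[1/3, 2/3]$ and $[2/3, 1]$ respectively; continuity at the gluing times is automatic because each stage begins as the identity on the image of the previous one. To see that $G_t$ is in fact a strong deformation retract onto $\overline{\S\mathbf{F}}$, I will use the inclusions $\overline{\S\mathbf{F}} \subset \overline{\S\mathbf{F}}_\CC'' \subset \overline{\S\mathbf{F}}_\CC'$: these ensure that points of $\overline{\S\mathbf{F}}$ are fixed throughout (for stages one and two by the remarks following the two lemmas, and for stage three by construction).

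It then remains to analyze the behavior on the reducible locus $\overline{\S\mathbf{F}}_\CC - \S\mathbf{F}_\CC = \{(a, a^{-1}) \,|\, a \in \CC^*\}$. Writing $a = r e^{i\alpha}$, property (4) in the proof of Lemma \ref{lem:first-retraction} shows that at the end of stage one the point $(r e^{i\alpha}, r^{-1} e^{-i\alpha})$ becomes $(h_1^1(r, r^{-1}) e^{i\alpha}, h_1^2(r, r^{-1}) e^{-i\alpha}) = (e^{i\alpha}, e^{-i\alpha})$, which lies on the unit circle of $\overline{\S\mathbf{F}}$ and is therefore fixed by the subsequent two stages. Hence $G_t$ coincides with stage one on the reducible locus, and this is the linear $\RR_{\geq 0}$-action $\lambda \cdot (r, s) = (\lambda^{-1} r, \lambda s)$ depicted in Figure \ref{fig:retraction}, which in the $(r, s)$-plane collapses the hyperbola $\{rs = 1\}$ to the single point $(1, 1)$ and in complex coordinates rescales $(a, a^{-1})$ to $(e^{i\alpha}, e^{-i\alpha}) \in \overline{\S\mathbf{F}} - \S\mathbf{F}$, exactly as claimed.

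No substantial obstacle is anticipated: the real geometric content has already been packaged into Lemmas \ref{lem:first-retraction} and \ref{lem:angle}, and the argument reduces to a bookkeeping check that the three stages glue continuously, that each of them fixes $\overline{\S\mathbf{F}}$, and that the reducible locus is carried by the composition onto $\overline{\S\mathbf{F}} - \S\mathbf{F}$ through a linear rescaling in the moduli.
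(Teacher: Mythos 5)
Your proposal is correct and follows essentially the same route as the paper: concatenate the retraction of Lemma \ref{lem:first-retraction}, the retraction of Lemma \ref{lem:angle}, and a final radial retraction of $\overline{\S\mathbf{F}}_\CC'' \cong \CC$ onto the closed unit disc $\overline{\S\mathbf{F}}$, then observe that the reducible locus is sent onto $\overline{\S\mathbf{F}} - \S\mathbf{F}$ already at the end of stage one and is fixed thereafter. Your version merely spells out an explicit formula for the third stage and the continuity check at the gluing times, which the paper leaves implicit.
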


\begin{rem}
At the light of Remark \ref{rem:homotopy-original}, Proposition \ref{prop:irredretract} has a clear interpretation: the non-trivial elements of $H_1(\S\mathbf{F}_\CC)$ and $H_2(\S\mathbf{F}_\CC)$ are annihilated when we glue back the hyperbola $\{ad = 1\}$ of totally reducible representations. Hence, only after taking the closures, the inclusion $\overline{\S\mathbf{F}} \hookrightarrow \overline{\S\mathbf{F}}_\CC$ becomes a homotopy equivalence.
\end{rem}

On the other hand, we can easily prove that the natural inclusion $\S X_2^{\textrm{TR}} \hookrightarrow X^{\textrm{TR}}(\G_n,\sldos)$ is a deformation retract.

\begin{prop}\label{prop:redretract}
The totally reducible locus $\S X_2^{\textup{TR}}$ of the $\SU(2)$-character variety is a strong deformation retract of the reducible locus of the $\sldos$-character variety $X^{\textrm{\textup{TR}}}(\G_n,\sldos)$ through a linear rescaling.
\end{prop}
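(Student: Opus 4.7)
The plan is to identify both spaces explicitly as quotients of tori and apply the obvious radial retraction on each factor. A totally reducible representation $(A,B) \in X^{\textrm{TR}}(\G_n,\sldos)$ is $S$-equivalent to a pair of commuting diagonal matrices $A = \diag(\lambda,\lambda^{-1})$, $B = \diag(\mu,\mu^{-1})$ with $(\lambda,\mu) \in \CC^* \times \CC^*$, unique up to the simultaneous Weyl-group involution $(\lambda,\mu)\mapsto (\lambda^{-1},\mu^{-1})$. Hence
$$
X^{\textrm{TR}}(\G_n,\sldos) \cong (\CC^* \times \CC^*)/\ZZ_2,
$$
and, by the description of $\SSym^2(S^1\times S^1)$ from Remark \ref{rem:alternative-SSym2}, the inclusion $\S X_2^{\textrm{TR}} \hookrightarrow X^{\textrm{TR}}(\G_n,\sldos)$ is induced by the natural inclusion $S^1\times S^1 \hookrightarrow \CC^*\times\CC^*$ after passing to $\ZZ_2$-quotients.

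Next I would define the straight-line rescaling homotopy $\widetilde{H}_t: \CC^*\times\CC^* \to \CC^*\times\CC^*$ by
$$
\widetilde{H}_t(\lambda,\mu) = \left(\frac{\lambda}{|\lambda|^{t}},\,\frac{\mu}{|\mu|^{t}}\right), \qquad t \in [0,1].
$$
This is continuous and well defined on $\CC^*\times\CC^*$, satisfies $\widetilde{H}_0 = \Id$, maps into $S^1\times S^1$ at $t = 1$, and fixes $S^1\times S^1$ for every $t$. The key point is equivariance: since $|\lambda^{-1}| = |\lambda|^{-1}$, the involution $(\lambda,\mu)\mapsto(\lambda^{-1},\mu^{-1})$ commutes with $\widetilde{H}_t$, so $\widetilde{H}_t$ descends to a well-defined continuous homotopy
$$
H_t: X^{\textrm{TR}}(\G_n,\sldos) \longrightarrow X^{\textrm{TR}}(\G_n,\sldos)
$$
on the $\ZZ_2$-quotient, which is the desired strong deformation retraction onto $\S X_2^{\textrm{TR}}$.

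There is essentially no obstacle here; the only thing to check carefully is $\ZZ_2$-equivariance of the rescaling, and this is immediate from $|\lambda^{-1}|^t = |\lambda|^{-t}$. In particular, the retraction is a linear rescaling of the modulus on each factor, as claimed in the statement.
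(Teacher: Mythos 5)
Your argument is exactly the paper's: identify $X^{\textrm{TR}}(\G_n,\sldos)$ with $(\CC^*\times\CC^*)/\ZZ_2$, retract each $\CC^*$ radially onto $S^1$, and observe that the retraction is $\ZZ_2$-equivariant so it descends to the quotient, landing on $\SSym^2(S^1\times S^1)$ via Remark~\ref{rem:alternative-SSym2}. The only difference is that you spell out the explicit formula $\widetilde{H}_t(\lambda,\mu)=(\lambda/|\lambda|^t,\mu/|\mu|^t)$ and the equivariance check, which the paper leaves implicit.
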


\begin{proof}
By \cite{gonzalezmunoz2022hopf}, the reducible locus is given by pairs $(\lambda,\mu) \in (\CC^{\ast})^2$ quotiented by the $\ZZ_2$-action that identifies $(\lambda,\mu)\sim (\lambda^{-1},\mu^{-1})$. The radial deformation retract of $(\CC^\ast)^2$ onto $(S^1)^2$ descends to a deformation retraction of $(\CC^{\ast})^2/\ZZ_2$ onto $\S X_2^{\textrm{TR}}\cong \SSym^2(S^1 \times S^1)$ (c.f.\ Remark \ref{rem:alternative-SSym2}), since it commutes with the $\mathbb{Z}_2$-action. 
\end{proof}

Now, observe that on the common locus of totally reducible representations that lie in the closure of irreducible ones, the homotopies of Proposition \ref{prop:irredretract} and \ref{prop:redretract} coincide. Hence, we can glue them together to give rise to a global homotopy $H: X(\G_n,\sldos) \times [0,1] \to X(\G_n,\SL(2,\CC))$ such that $H_0 = \Id_{X(\G_n,\sldos)}$, $H_t|_{X(\G_n,\SU(2))} = \Id_{X(\G_n,\SU(2))}$ and $H_1(X(\G_n,\SL(2,\CC)))\subseteq X(\G_n,\SU(2))$. Therefore, we have proven the following result.

\begin{thm}\label{thm:retract}
The $\SU(2)$-character variety $\S X_2 = X(\G_n,\SU(2))$ of a twisted Hopf link is a strong deformation retract of the $\sldos$-character variety $X(\G_n,\sldos)$.
\end{thm}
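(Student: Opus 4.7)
The strategy is to assemble a global strong deformation retraction by pasting together the two stratum-wise retractions already obtained in Propositions \ref{prop:irredretract} and \ref{prop:redretract}, after verifying that they agree on their common boundary.

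First, I would decompose
\[
X(\G_n,\sldos) \;=\; X^{*}(\G_n,\sldos) \cup X^{\textrm{TR}}(\G_n,\sldos).
\]
The irreducible locus $X^{*}(\G_n,\sldos)$ splits into $n-1$ connected components, indexed by the unordered eigenvalue pair $\{\lambda,\lambda^{-1}\}$ of $A$ with $\lambda \in \bm{\mu}_{2n}^{+}$; the closure of each such component is isomorphic to $\overline{\S\mathbf{F}}_\CC \cong \CC^{2}$ with coordinates $(a,d)$. The totally reducible locus is $X^{\textrm{TR}}(\G_n,\sldos) = (\CC^{\ast})^{2}/\ZZ_{2}$, parametrized by the eigenvalue pair $(\lambda,\mu)$ modulo simultaneous inversion. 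The two strata meet, in the closure of the irreducible one, along the hyperbolas $\{ad = 1\} \subset \overline{\S\mathbf{F}}_\CC$, which on the reducible side correspond to the $n-1$ tori $\{\lambda\} \times \CC^{\ast}$ with $\lambda \in \bm{\mu}_{2n}^{+}$.

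Next, I would invoke Proposition \ref{prop:irredretract} on each of the $n-1$ closures $\overline{\S\mathbf{F}}_\CC$ to get a strong deformation retraction onto the closed disc $\overline{\S\mathbf{F}}$, and Proposition \ref{prop:redretract} on $X^{\textrm{TR}}(\G_n,\sldos)$ to get the radial strong deformation retraction $(\lambda,\mu) \mapsto (\lambda/|\lambda|,\mu/|\mu|)$ onto $\S X_{2}^{\textrm{TR}} \cong \SSym^{2}(S^{1} \times S^{1})$. Both families of maps are stratum-preserving and fix $\S X_{2}$ pointwise.

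The main obstacle is to check that these two homotopies coincide on the overlap, so that they glue to a continuous global map. A point $(a,a^{-1})$ in the reducible boundary of one irreducible component represents the semisimple class of $(\diag(\lambda,\lambda^{-1}),\diag(a,a^{-1}))$. From the last case in the proof of Lemma \ref{lem:first-retraction}, the identities $h_{t}^{1}(r,r^{-1}) \cdot h_{t}^{2}(r,r^{-1}) \equiv 1$ and $h_{1}^{1} = h_{1}^{2} = 1$ show that Proposition \ref{prop:irredretract} sends $(a,a^{-1})$ to $(a/|a|,|a|/a)$ at time $t=1$, while Lemma \ref{lem:angle} acts trivially along this hyperbola. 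This is precisely the restriction of the radial retraction of Proposition \ref{prop:redretract} to the torus $\{\lambda\} \times \CC^{\ast}$, since $|\lambda|=1$ already. Gluing the homotopies along this common behaviour yields the required continuous map $H\colon X(\G_n,\sldos) \times [0,1] \to X(\G_n,\sldos)$ with $H_{0} = \Id$, $H_{1}\bigl(X(\G_n,\sldos)\bigr) \subseteq \S X_{2}$, and $H_{t}|_{\S X_{2}} = \Id_{\S X_{2}}$, which establishes the theorem.
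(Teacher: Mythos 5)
Your proof is correct and follows essentially the same route as the paper: decompose $X(\G_n,\sldos)$ into its irreducible and totally reducible loci, invoke Propositions \ref{prop:irredretract} and \ref{prop:redretract} for the stratum-wise retractions, and glue along the shared locus of totally reducible representations in the closure of the irreducible components. Your explicit verification of the gluing — tracing the point $(a,a^{-1})$ through the composite of Lemmas \ref{lem:first-retraction} and \ref{lem:angle} to land at $(a/|a|,|a|/a)$, and matching this against the radial retraction of Proposition \ref{prop:redretract} — is a useful elaboration of what the paper only asserts, though note that for the homotopies to literally glue to a single continuous map one must check (or arrange by a reparametrization in $t$) that the two families agree for all $t\in[0,1]$ on the overlap, not just at $t=1$; this is implicit in the phrase ``linear rescaling'' used in both propositions.
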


\section{$\U(2)$ and $\SU(3)$ character varieties}

In this section, using the results of Section \ref{sec:irreducible} and \ref{sec:totally-reducible}, we shall describe the stratification in the character varieties $X_2 = X(\G_n, \U(2))$ and $\S X_3 = X(\G_n, \SU(3))$. As we will see, a much more involved geometry arises in these cases, with many strata interacting with non-trivial intersection patterns.

\subsection{$\U(2)$-character variety}\label{sec:u2-character-variety}

As in Section \ref{sec:SU2character}, we get again two possible cases arising from the partitions $\pi_0=(2)$ and $\pi_1=(1,1)$. 
\begin{enumerate}
\item $\pi_0 = (2)$ corresponds to irreducible representations $X_2^*$ and, as for $\SU(2)$, the only configuration of eigenvalues that contributes is $\sigma = (1,1)$ (which has parameters $r_1 = 1$ and $a_1=2$ in the notation of Section \ref{sec:irreducible}). Hence, the number of different eigenvalues is $N = a_1 = 2$ and thus by Corollary \ref{cor:irreducible-u-simplecase}, we have that
$$
	X_2^* = X_{(1,1)}^* = (S^1 \times \bm{\mu}_n^* \times  \mathbf{F}_{(1,1)})/\ZZ_2,
$$
where we have used that $ \Delta^{1}_{\bm{\mu}_n} = \bm{\mu}_n^*$. The action of $\ZZ_2$ on $S^1 \times \bm{\mu}_n^* \times {\mathbf{F}}_{(1,1)}$ is $(\lambda, \epsilon, B) \mapsto (\lambda \epsilon, \epsilon^{-1}, P_0BP_0^{-1})$, where $P_0$ is the permutation matrix that exchanges the columns of $B$. We have two options:

\begin{enumerate}
	\item If $n$ is odd, then there exists a unique representative $(\lambda, \epsilon, B)$ with $\Im(\epsilon) > 0$. Hence, using that $ {\mathbf{F}}_{(1,1)} = \S{\mathbf{F}}_{(1,1)} \times S^1 =\mathbf{D} \times S^1$ with $\mathbf{D}$ an open $2$-dimensional disc (c.f.\ Section \ref{sec:SU2character}), we get
$$
	X_2^* = S^1 \times \bm{\mu}_n^+ \times \mathbf{D} \times S^1,
$$
where $\bm{\mu}_n^+ = \{\epsilon \in \bm{\mu}_n \,|\, \Im(\epsilon) > 0\}$. These are $(n-1)/2$ copies of $S^1 \times \mathbf{D} \times S^1$.

	\item If $n$ is even, we still get $(n-2)/2$ copies of $S^1 \times \mathbf{D} \times S^1$, corresponding to the roots of unity with $\Im(\epsilon) > 0$. However, for $\epsilon = -1$ we get a residual action of $\ZZ_2$ on $S^1 \times \mathbf{D} \times S^1$. Since the action in the first copy of $S^1$ is $\lambda = e^{2\pi i \theta} \mapsto -\lambda = e^{2\pi i (\theta + 1/2)}$, there is an unique representative with $0 \leq \theta < 1/2$. Hence, in this case we get a contribution $\mathbf{I}_0 \times \mathbf{D} \times S^1$, where $\mathbf{I}_0 = [0,1/2)$.
\end{enumerate}

\item $\pi_1=(1,1)$ corresponds to the set of totally reducible representations $X^{\textrm{TR}}_2 = \Sym^2(S^1 \times S^1)$. As proven in Section \ref{sec:sym-2}, we have a $S^2$-fibration
$$
	S^2 \to \Sym^2(S^1 \times S^1) \to S^1 \times S^1.
$$
The monodromy of this fibration is
$(\lambda,\mu) \mapsto (e^{i\pi}\lambda, e^{i\pi}\mu)=(-\lambda,-\mu)$, that is an orientation preserving action interchanging 
the four orbifold points of $S^2$ in pairs.
\end{enumerate}

\subsection{$\SU(3)$-character variety}

In this case, we have three possible semisimple types corresponding to the partitions $\pi_0=(3)$, $\pi_1=(1,1,1)$ and $\pi_2 = (1,2)$. 

\begin{itemize}
	\item For $\pi_0=(3)$ we get irreducible representations $\S X_3^*$. We stratify according to the repeated eigenvalues of the matrix $A$ of a representation $(A,B) \in \S X^*_3$ as in (\ref{eq:stratification-eigen-rep}) by
	$$
		\S X_3^* = \S X_{(1,1,1)}^* \sqcup \S X_{(1,2)}^* \sqcup \S X_{(3)}^*.  
	$$
	\begin{enumerate}
		\item $\sigma = (1,1,1)$ (three different eigenvalues). By the results of Section \ref{sec:distinct-eigenvalues}, we have that the space $\S X_{(1,1,1)}^*$ is a collection of
		$$
			N_{(1,1,1)} = \frac{3}{n} \begin{pmatrix}n \\ 3\end{pmatrix} = \frac{(n-1)(n-2)}{2}
		$$
		copies of a certain subspace $\S \mathbf{F}_{(1,1,1)} \subset \SU(3)/(S^1)^3$.
		
		For the projection map onto the coarse orthant $\varphi: \SU(3)/(S^1)^3 \to B_3 = \{(z, x_2, x_3) \in \CC \times \RR_{\geq 0}^2\,|\, |z|^2 + x_2^2 + x_3^2 = 1\} \subset S^3$, on the interior of $B_3$ the preimage is exactly $\S \mathbf{F}_{(1,1,1)}$ and the fiber is $S^3$. On an edge of $\partial B_3$, let us say the one in which $x_2 = 0$, fixed a first column $(z, 0, x_3) \in \partial B_3$, the fiber is isomorphic to the quotient under the action of $S^1 \times S^1$ of the space
		\begin{alignat}{3}
			\qquad & \qquad & \left\{(w_1, w_2, w_3) \in \CC^3 \,\left|\, w_3 = - \frac{w_1\bar{z}}{x_3}, \,\, |w_1|^2 + |w_2|^2 + |w_3|^2 =1, \,\, (A,B)\textrm{ is irreducible}\right.\right\}.\nonumber
		\end{alignat}
		Since, $(A,B)$ must be irreducible, we have that $w_1, w_3 \neq 0$ and hence, using the action of $S^1 \times S^1$ we find an unique point in the orbit with $w_1 \in \RR_{>0}$. Hence, the fiber of $\S \mathbf{F}_{(1,1,1)}$ on an edge is the orthant $B_2 \subset S^3$, which is a $2$-dimensional disc. Finally, the vertices of the orthant are not included since otherwise the representation would be reducible.
		\item $\sigma = (1,2)$ (two coincident eigenvalues). In this case, the space $\S X_{(1,2)}^*$ is a collection of
		$$
			N_{(1,2)} = \frac{3}{n} \begin{pmatrix}n \\ 2\,1\end{pmatrix} = \frac{3(n-1)(n-2)}{2}
		$$
		copies of a certain subspace $\mathbf{F}_{(1,2)} \subset \SU(3)/(\U(2) \times S^1)$, where $\U(2) \times S^1$ is the stabilizer of the type $\sigma = (2,1)$. Using the action of this stabilizer, any representation $(A,B)$ can be put in the form $A = \textrm{diag}(\lambda_1, \lambda_1, \lambda_2)$ and
		$$
			B = \begin{pmatrix} x_1 & y_1 & z_1 \\ 0 & y_2 & z_2 \\ x_3 & y_3 & z_3 \end{pmatrix}.
		$$ 
		The stabilizer of this shape of matrices is now $(S^1)^3$, with action as in Section \ref{sec:distinct-eigenvalues}. This corresponds exactly to an edge in case (1), so we get that $\mathbf{F}_{(1,2)} \cong \mathbf{I} \times B_2$.
		\item $\sigma = (3)$ (three equal eigenvalues). There are no elements in this stratum since they would be reducible.
	\end{enumerate}
	
	\item For $\pi_1=(1,1,1)$ we get totally reducible representations, which are $\S X_3^{\mathrm{TR}} = \SSym^3(S^1 \times S^1)$. This space is described in Section \ref{sec:sym-3}.
	\item For $\pi_2=(1,2)$, we get that $\S X^{(1,2)}_3 = X_2$. This space was described in Section \ref{sec:u2-character-variety}.
\end{itemize}

Despite this description provides an accurate geometric picture, the intersection pattern it describes is too involved to compute the homotopy types of these character varieties. A prospective future work is to seek  alternative approaches that fully characterize these intersections, enabling effective homological and homotopical calculations in this higher rank case.

\end{document}